
\newif\ifpictures
\picturestrue

\newif\ifcomment
\commenttrue

\newif\ifQCQP
\QCQPfalse

\documentclass[12pt]{amsart}
\usepackage{latex_base}
\usepackage{macros}

\usepackage{footnote}
\makesavenoteenv{table}


\author{Mareike Dressler}
\address{Mareike Dressler, University of California, San Diego
	Department of Mathematics
	9500 Gilman Drive 0112
	La Jolla, CA 92093-0112
	USA\medskip}
\email{mdressler@ucsd.edu}

\author{Janin Heuer}
\address{Janin Heuer, Technische Universit\"at Braunschweig, Institut f\"ur Analysis und Algebra, AG Algebra, Universit\"atsplatz 2, 38106 Braunschweig,
 Germany\medskip}
\email{janin.heuer@tu-braunschweig.de}

\author{Helen Naumann}
\address{Helen Naumann, Goethe Universit\"at, FB 12- Institut f\"ur Mathematik, Postfach 11 19 31, D-60054, Frankfurt a.M.
	Germany\medskip}
\email{naumann@math.uni-frankfurt.de}

 \author{Timo de Wolff}
\address{Timo de Wolff, Technische Universit\"at Braunschweig, Institut f\"ur Analysis und Algebra, AG Algebra, Universit\"atsplatz 2, 38106 Braunschweig,
 Germany\medskip}
\email{t.de-wolff@tu-braunschweig.de}

\subjclass[2010]{12D15, 13J30, 14P05, 90C05, 90C26}

\keywords{circuit polynomial, dual cone, linear programming, nonconvex global optimization, SONC}

\title[]{Global Optimization via the Dual SONC Cone and Linear Programming}

\begin{document}

\begin{abstract}
	Using the dual cone of sums of nonnegative circuits (SONC), we provide a relaxation of the global optimization problem to minimize an exponential sum and, as a special case, a multivariate real polynomial.
	Our approach builds on two key observations. 
	First, that the dual SONC cone is contained in the primal one.
	Hence, containment in this cone is a certificate of nonnegativity.
	Second, we show that membership in the dual cone can be verified by a linear program.
	We implement the algorithm and present initial experimental results comparing our method to existing approaches.

\end{abstract}

\maketitle

\section{Introduction}

Let $\struc{A} \subseteq \R^n$ be a finite set and let $\struc{\R^A}$ denote the space of all \textit{\struc{(sparse) exponential sums}} (signomials) supported on $A$. These are of the form
\begin{align}
	\struc{f}=\sum_{{\alpb} \in A} c_{{\alpb}} e^{\langle \xb,{{\alpb}}\rangle } \in\R^A, \quad c_{{\alpb}} \in \R \text{ for all } \alpb \in A. \label{setupgen}
\end{align}

We consider the following global optimization problem
\begin{align}
	\inf_{\xb \in \R^n} f(\xb),
	\label{Intro:KeyProblem}
\end{align}
which is the unconstrained version of a \textit{\struc{signomial optimization problem}}. Signomial programs are a rich class of nonconvex optimization problems with a broad range of applications; see e.g., \cite{Boyd:et:al:TutorialOnGP,Duffin:Peterson:Signomials} for an overview.

If $A \subseteq \N^n$, then $\R^A$ coincides with the space of real polynomials on the positive orthant supported on $A$. Thus, \cref{Intro:KeyProblem} also represents all unconstrained \textit{\struc{polynomial optimization problems}} on $\R_{>0}^n$; see e.g. \cite{Blekherman:Parrilo:Thomas, Lasserre:BookMomentsApplications, Lasserre:IntroductionPolynomialandSemiAlgebraicOptimization} for an overview about polynomial optimization problems and their applications.

\medskip

Under the assumption that \cref{Intro:KeyProblem} has a finite solution, minimizing $f \in \R^A$ is equivalent to adding a minimal constant $\gamma$ such that $f + \gamma \geq 0$. 
Hence, we consider the (convex, closed) \struc{\textit{sparse nonnegativity cone}} in $\R^A$, which is defined as
\begin{align}
	\struc{\NonnegCone} \ = \ \{f \in \R^A \ : \ f(\Vector{x}) \geq 0 \text{ for all } \xb \in \R^n\}. \label{Equation:NonnegativityCone}
\end{align}

It is well-known that deciding nonnegativity is NP-hard even in the polynomial case; see e.g.,~\cite{Laurent:Survey}. Thus, a common way to attack \cref{Intro:KeyProblem}, is to search for \struc{\textit{certificates of nonnegativity}}. These conditions, which imply nonnegativity, are easier to test than nonnegativity itself, and are satisfied for a vast subset of $\NonnegCone$. In the polynomial case, a well-known example of a certificate of nonnegativity are \struc{\textit{sums of squares (SOS)}}, which can be tested via \textit{\struc{semidefinite programming}} \cite{Lasserre:GlobalOpt,Parrilo:Thesis}.
Unfortunately, SOS decompositions do, in general, not preserve the sparsity of $A$.

Another certificate of nonnegativity is a decomposition of $f$ into \textit{\struc{sums of nonnegative circuit functions (SONC)}}, which were introduced by Iliman and the last author for polynomials \cite{Iliman:deWolff:Circuits} generalizing work by Reznick \cite{Reznick:AGI}. Recently, the SONC approach was generalized and reinterpreted by Forsg{\aa}rd and the last author \cite{Forsgaard:deWolff:BoundarySONCcone}. A \textit{\struc{circuit function}} is a function, which is supported on a minimally affine dependent set; see \cref{Definition:Circuit}. For these kind of functions nonnegativity can effectively be decided by solving a system of linear equations; see \cref{Theorem:CircuitsNonnegativity}.

SONCs form a closed convex cone $\struc{\genCS} \subseteq \NonnegCone$. This cone and the functions therein respectively were investigated independently by other authors using a separate terminology. The perspective of considering $\genCS$ as a subclass of nonnegative signomials was originally introduced by Chandrasekaran and Shah \cite{Chandrasekaran:Shah:SAGE-REP} under the name \struc{\textit{SAGE}}, which was later generalized by Chandrasekaran, Murray, and Wiermann \cite{Murray:Chandrasekaran:Wierman:SoncSage, Murray:Chandrasekaran:Wierman:PartialDual}. Furthermore, the notion of SONC was re-interpreted by Katth\"an, Theobald and the third author \cite{Katthaen:Naumann:Theobald:UnifiedFramework} under the name \struc{$\Sc$\textit{-cone}}. We discuss the relation of these different approaches to each other in \cref{Section:Preliminaries}.

\medskip

The key idea of this article is to relax the problem \cref{Intro:KeyProblem} via optimizing over the \textit{\struc{dual SONC cone $\dualCS$}}; see \cref{Definition:DualSONCCone} for a rigorous definition. 
Our approach is motivated by the recent works \cite{Dressler:Naumann:Theobald:SONCdual}, \cite{Murray:Chandrasekaran:Wierman:SoncSage}, and \cite{Katthaen:Naumann:Theobald:UnifiedFramework}, and builds on two key observations, which are the main theoretical contributions:
\begin{enumerate}
	\item The dual SONC cone is contained in the primal one; see \cref{prop:primalindual}.
	\item Optimizing over the dual cone can be carried out by linear programming; see \cref{prop:containment}.
\end{enumerate}

We emphasize that neither the primal nor the dual SONC cone is polyhedral; see in this context also the results in \cite{Forsgaard:deWolff:BoundarySONCcone}. The approach works as follows: First, we investigate a lifted version of the dual cone involving additional linear auxiliary variables (\cref{thm:dualcone} (3)). Second, we show that the coefficients of a given  exponential sum can be interpreted as variables of the dual cone; see \cref{idfct}. Third, we observe that fixing these coefficient variables yields an optimization problem only involving the linear auxiliary variables; see \cref{prop:containment}

Based on our two key observations stated above, we present in \cref{sec:LPapprox} two linear programs \cref{LP-Relax1} and \cref{LP-Relax2} solving a relaxation of \cref{Intro:KeyProblem}.
We implemented the proposed algorithm and provide a collection of examples showing that \cref{LP-Relax1} and \cref{LP-Relax2} work in practice. 
Using the software POEM \cite{poem:software}, we compare our approach exemplarily to existing algorithms for finding SONC and SAGE decompositions via the primal cone $\genCS$, as well as to SOS bounds.

\section*{Acknowledgments}

We thank Thorsten Theobald for his help and input during the development of this article. We thank the anonymous referees for their helpful comments.

TdW is supported by the DFG grant WO 2206/1-1.

\section{Preliminaries}
\label{Section:Preliminaries}

We display vectors in bold notation, e.g., $\struc{\Vector{x}}$ for $(x_1,\ldots,x_n)$.
Throughout the article we write 
$\struc{\R_{> 0}}=\{x\in\R:x > 0\}$. 
Given a set $A \subseteq \R^n$ we denote by $\struc{\conv(A)}$ its \struc{\textit{convex hull}}. We refer to the vertices of $\conv(A)$ as $\struc{\vertices{\conv(A)}}$. 
For a given linear space $L$ we denote by $\struc{\widecheck L}$ its \struc{\textit{dual space}}, and, similarly, for a given cone $C \subseteq \R^n$, we denote by $\struc{\widecheck C}$ its \textit{\struc{dual cone}}.
For the logarithmic function, we use the conventions $0\ln (\frac{0}{y}) = 0$, $\ln (\frac{y}{0}) = \infty$ if $y > 0$ and $\ln (\frac{0}{0}) = 0$, and in addition $\ln (0) = -\infty$.

\subsection{Nonnegativity and the SONC Cone} 

Let $\struc{A} \subseteq \R^n$ be a finite set referred to as the \struc{\textit{support set}}; in what follows we set $\struc{d} = \# A$. 
Recall that we consider exponential sums $f$ of the form \cref{setupgen}. For such an $f$, we set $\struc{\supp(f)} = A$ and denote the vector of coefficients as $\struc{\Vector{c}}$. If $f$ is comprised by a single term, we call it an \struc{\textit{exponential monomial}}.

Following the approach of \textit{fewnomial theory} (also referred to as ``\textit{$A$-philosophy}'' by Gelfand, Kapranov and Zelevinsky; see e.g., \cite{GKZ94}), we fix $A \subseteq \R^n$ and consider the space $\struc{\R^A}$ of all functions with support set $A$, i.e.,
\begin{align*}
	\struc{\R^A} = \spann_\R\left(\left\{\expalpha \with {\alpb}\in A\right\}\right) .
\end{align*}
Since $A$ is fixed, every $f \in \R^A$ can be identified with its coefficient vector and hence there exists a canonical isomorphism $\R^A \simeq \R^d$, i.e., we denote both, vectors and functions, as elements in $\R^A$. 
If $A \subseteq \N^n$, $\R^A$ coincides with the space of real polynomials on the positive orthant supported on $A$.

Recall that the sparse nonnegativity cone $\NonnegCone$ defined in \cref{Equation:NonnegativityCone} is a full-dimensional convex closed cone in $\R^A$.
It is a well-known fact that $f \in \NonnegCone$ only if all coefficients associated to vertices of $\conv(\supp(f))$ are positive; see e.g., \cite{Feliu:Kaihnsa:Yueruek:deWolff} for a detailed proof. Thus, we make the assumption
\begin{align}
	\alpb \in \vertices{\conv(\supp(f))} \ \Rightarrow \ c_{\alpb} > 0. \label{Equation:VerticesAreNonnegative}
\end{align}

Since deciding membership in $\NonnegCone$ is NP-hard, we intend to \textit{certify} membership in $\NonnegCone$ via considering a subcone. For us, the main ingredient is an object called a \textit{circuit function}. Recall that a subset $A'$ of $A$ is called a \struc{\textit{circuit}} if $A$ is minimally affine dependent (i.e., all real subsets of $A'$ are affinely independent); see e.g., \cite{DeLoera:et:al:Triangulations}. 
A special version of circuit functions was first introduced under the name \textit{\struc{simplicial AGI-form}} by Reznick in \cite{Reznick:AGI}, the general definition was given by Iliman and the last author in \cite{Iliman:deWolff:Circuits} focusing on polynomials. Here, we build on a recent, generalized notion by Forsg{\aa}rd and the last author \cite{Forsgaard:deWolff:BoundarySONCcone}.

\begin{definition}[\circuit]
	A function $f\in \R^A$ is called a \struc{\textit{\circuit}} if $\supp(f)$ is a circuit, $\conv(\supp(f))$ is a simplex, and it satisfies \cref{Equation:VerticesAreNonnegative}.
	\label{Definition:Circuit}
\end{definition}

In the special case $A \subseteq \N^n$, {\circuit}s are precisely \struc{\textit{circuit polynomials}} on $\R^{n}_{>0}$ as introduced in \cite{Iliman:deWolff:Circuits}.
Here and in the following we refer to those exponents of $f \in \R^A$ that are contained in the relative interior of $\conv(A)$ by $\struc{\betab} \in A$.

A crucial fact about a \circuit\; $f$ is that its nonnegativity can be decided by an invariant $\struc{\Theta_f}$ called the \struc{\textit{circuit number}} alone. Specifically, Iliman and the last author showed for the case of polynomials, which immediately generalizes to the case of circuit functions:

\begin{theorem}[\cite{Iliman:deWolff:Circuits}, Theorem 1.1]
	Let $f=\sum_{j=0}^r c_{\Vector{\alpha(j)}} \xb^{\Vector{\alpha(j)}} + c_{\betab} \xb^{\betab}$ with $0 \le r \le n$ be a circuit polynomial with $\Vector{\alpha(0)}, \ldots, \Vector{\alpha(r)}\in(2\N)^n$, and let $\Vector{\lambda} \in \R_{> 0}^r$ denote the vector of barycentric coordinates of $\betab$ in terms of the $\Vector{\alpha(0)}, \ldots, \Vector{\alpha(r)}$. Then $f$ is nonnegative if and only if
	\begin{align*}
		|c_{\betab}| \ \leq \ \struc{\Theta_f} \ = \ \prod_{j=0}^r \left(\frac{c_{\Vector{\alpha(j)}}}{\lambda_j}\right)^{\lambda_j}
	\end{align*}
	or if $f$ is a sum of monomial squares.
	\label{Theorem:CircuitsNonnegativity}
\end{theorem}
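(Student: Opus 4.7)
The proof splits into the two implications, both routed through the weighted AM--GM inequality applied to the monomials at the vertices of the simplex $\conv(\supp(f))$.

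\textbf{Sufficiency.} The plan is to apply weighted AM--GM with weights $\lambda_0,\dots,\lambda_r$ (which are positive and sum to one) to the terms $(c_{\Vector{\alpha(j)}}/\lambda_j)\,\Vector{x}^{\Vector{\alpha(j)}}$. Because $\Vector{\alpha(j)}\in(2\N)^n$, each monomial $\Vector{x}^{\Vector{\alpha(j)}}$ equals $|\Vector{x}|^{\Vector{\alpha(j)}}\ge 0$, so AM--GM yields
\begin{equation*}
  \sum_{j=0}^r c_{\Vector{\alpha(j)}}\Vector{x}^{\Vector{\alpha(j)}}
  \;\ge\; \prod_{j=0}^r\Bigl(\tfrac{c_{\Vector{\alpha(j)}}}{\lambda_j}\Bigr)^{\lambda_j}
  \prod_{j=0}^r |\Vector{x}|^{\lambda_j\Vector{\alpha(j)}}
  \;=\; \Theta_f\, |\Vector{x}|^{\betab},
\end{equation*}
since $\sum_j\lambda_j\Vector{\alpha(j)}=\betab$. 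Combined with the elementary bound $c_{\betab}\Vector{x}^{\betab}\ge -|c_{\betab}|\,|\Vector{x}|^{\betab}$, this gives $f(\Vector{x})\ge(\Theta_f-|c_{\betab}|)\,|\Vector{x}|^{\betab}\ge 0$ whenever $|c_{\betab}|\le\Theta_f$. The sum-of-monomial-squares case is immediate since every term is then a nonnegative monomial.

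\textbf{Necessity.} For the converse, the idea is to locate a point $\Vector{x}^\star$ at which AM--GM becomes an equality, so that the sign of $f(\Vector{x}^\star)$ pins down the desired bound. Equality in weighted AM--GM requires that all quantities $(c_{\Vector{\alpha(j)}}/\lambda_j)\,(\Vector{x}^\star)^{\Vector{\alpha(j)}}$ coincide. Taking logarithms, this becomes an affine linear system in $\log\Vector{x}^\star$ with right-hand side determined by the $\log(c_{\Vector{\alpha(j)}}/\lambda_j)$; because $\Vector{\alpha(0)},\dots,\Vector{\alpha(r)}$ are affinely independent (they form a simplex with $r+1\le n+1$ vertices), such an $\Vector{x}^\star\in\R_{>0}^n$ exists. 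At this point $\sum_j c_{\Vector{\alpha(j)}}(\Vector{x}^\star)^{\Vector{\alpha(j)}}=\Theta_f(\Vector{x}^\star)^{\betab}$, hence
\begin{equation*}
  f(\Vector{x}^\star)\;=\;\bigl(\Theta_f+c_{\betab}\bigr)(\Vector{x}^\star)^{\betab}.
\end{equation*}

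\textbf{Sign handling.} The remaining task, which is the main technical point, is to convert this into the absolute value estimate $|c_{\betab}|\le\Theta_f$. If $\betab\notin(2\N)^n$, some coordinate $\beta_k$ is odd; flipping the sign of $x_k^\star$ leaves all $(\Vector{x}^\star)^{\Vector{\alpha(j)}}$ unchanged (even exponents) but reverses the sign of $(\Vector{x}^\star)^{\betab}$. Hence one can choose the sign pattern so that $c_{\betab}(\Vector{x}^\star)^{\betab}=-|c_{\betab}|\cdot|(\Vector{x}^\star)^{\betab}|$, yielding $f(\Vector{x}^\star)=(\Theta_f-|c_{\betab}|)|(\Vector{x}^\star)^{\betab}|$; nonnegativity then forces $|c_{\betab}|\le\Theta_f$. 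If instead $\betab\in(2\N)^n$, the sign of $(\Vector{x}^\star)^{\betab}$ is always nonnegative, so the above argument only yields $c_{\betab}\ge-\Theta_f$; but in this regime every monomial of $f$ has an even exponent, so either $c_{\betab}\ge 0$ and $f$ is a sum of monomial squares, or $c_{\betab}<0$ and we recover $|c_{\betab}|\le\Theta_f$. This bookkeeping around the parity of $\betab$ is the only place where the two cases in the statement need to be distinguished, and is the step I expect to require the most care.
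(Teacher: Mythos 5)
The paper does not prove this statement at all --- it is imported verbatim as Theorem~1.1 of the cited reference \cite{Iliman:deWolff:Circuits} --- so there is no in-paper proof to compare against. Your argument is correct and complete: the sufficiency direction is the standard weighted AM--GM estimate $\sum_j c_{\Vector{\alpha(j)}}\xb^{\Vector{\alpha(j)}}\ge\Theta_f|\xb|^{\betab}$ (valid because the vertex exponents are even and the barycentric weights of the interior point are strictly positive and sum to one), and the necessity direction correctly exploits the AM--GM equality point, which exists in $\R_{>0}^n$ because the vectors $\Vector{\alpha(j)}-\Vector{\alpha(0)}$ are linearly independent, so the log-linear system is solvable. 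Your parity bookkeeping at the end is exactly where the ``or $f$ is a sum of monomial squares'' alternative enters, and you handle both subcases ($c_{\betab}\ge 0$ versus $c_{\betab}<0$ when $\betab\in(2\N)^n$) correctly. For context, the original proof in \cite{Iliman:deWolff:Circuits} reaches the same conclusion by a slightly different presentation --- normalizing the circuit to a standard form and analyzing the unique critical point of the restriction to the positive orthant under the substitution $\xb=e^{\Vector{w}}$ --- but that critical point is precisely your AM--GM equality point, so the two arguments are mathematically the same; yours is arguably the more self-contained and elementary write-up.
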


Note furthermore that a circuit polynomial is nonnegative on $\R^n$ if and only if it is nonnegative on $\R^{n}_{>0}$ (this is, of course, not the case for general polynomials). 
Thus, if one is specifically interested in certifying nonnegativity of polynomials on the entire $\R^n$ using circuit polynomials, then one needs to relax the problem first such that the minimum is attained on $\R^n_{> 0}$. 
We refer readers who are interested in further details to the discussion in \cite[Section 3.1]{Iliman:deWolff:Circuits}.

We consider now the cone of all sums of nonnegative circuits.

\begin{definition}
	We define the \struc{\textit{SONC cone}} $\struc{\genCS}$ as the subset of all $f \in \NonnegCone$, which can be written as a sum of nonnegative circuit functions or nonnegative exponential monomials.
	\label{Definition:SONCCone}
\end{definition}

It is easy to see that $\genCS$ indeed is a convex cone (compare e.g., \cite{Iliman:deWolff:Circuits, Forsgaard:deWolff:BoundarySONCcone}), and it can be shown that $\dim(\NonnegCone) = \dim(\genCS)$; see \cite[Theorem 4.1]{Dressler:Iliman:deWolff:Positivstellensatz} for the non-sparse, polynomial case, which generalizes verbatim to the sparse case considered here.

\medskip 

The SONC cone was studied over the past years by other authors using different approaches and terminology. We especially emphasize two of them:

\begin{enumerate}
	\item  Katth\"an, Theobald, and the third author studied the \struc{$\Sc$-\textit{cone}} in \cite{Katthaen:Naumann:Theobald:UnifiedFramework}. This cone contains sums of nonnegative functions $f: \R^n \to \R \cup \set{\infty}$ of the form
	\begin{align*}
	f(\xb) \ = \ \sum_{\alpb \in \cA} c_{\alpb} |\xb|^{\alpb} +  \sum_{\betab \in \cB} d_{\betab} \xb^{\betab},
	\end{align*}
	where $\cA\subseteq \R^n$ and $\cB\subseteq \N^n\setminus(2\N)^n$ are finite sets of exponents, $\{c_{\alpb}: \alpb\in \cA\}\subseteq \R, \{d_{\betab}: \betab\in \cB\}\subseteq \R$  with either at most one $\betab\in\cB$ such that $d_{\betab}\ne 0$ and $c_{\alpb}\ge0$ for every $\alpb\in\cA$, or $d_{\betab}=0$ for all $\betab\in\cB$ and there exists at most one $\alpb \in\cA$ such that $c_{\alpb} <0$. 
	Since each term with exponent in $\cA$ is isomorphic to an exponential monomial, and it is sufficient to test  nonnegativity of these functions on $\R_{>0}^n$, the functions in the $\Sc$-cone can be regarded as an exponential sum of the form \eqref{setupgen}. Furthermore, one can show that for $\cB=\emptyset$ the $\Sc$-cone coincides with the SONC cone as given in \cref{Definition:SONCCone}. 
	\item Chandrasekaran and Shah introduced an object called \struc{\textit{SAGE cone}} in \cite{Chandrasekaran:Shah:SAGE-REP}, which was then studied further in follow-up articles by Chandrasekaran, Murray, and Wiermann \cite{Murray:Chandrasekaran:Wierman:SoncSage,Murray:Chandrasekaran:Wierman:PartialDual}. This cone contains sums of nonnegative \struc{\textit{AGE functions}}, where an AGE function is of the form
\begin{align*}
	f(\xb) \ = \ \sum_{{\alpb} \in A'} c_{{\alpb}} e^{\langle \xb,{{\alpb}}\rangle } + c_{{\betab}}e^{\langle \xb,{{\betab}}\rangle} \in \R^A,
\end{align*}
such that $A' \subseteq A\subseteq \R^n$, $\betab \in A\setminus A'$, and $c_{{\alpb}} > 0, c_{{\betab}} \in \R$. 

Note that for an {\agf} to be nonnegative, it needs to hold that $\betab \in \conv(A')$.

 The SAGE cone coincides with the SONC cone $\genCS$. This was shown by Reznick in the case of AGI-forms already 1989 in \cite{Reznick:AGI}. AGI-forms are a special case of circuit polynomials when choosing $c_{\alpb}=\lambda_j$ and $c_{\betab}=-1$. For the general case it was first shown (but not explicitly stated) by Wang \cite{Wang:AtMostOneNegativeTerm}. Briefly afterwards, Chandrasekaran, Murray, and Wiermann \cite{Murray:Chandrasekaran:Wierman:SoncSage} finally were the first to explicitly state this fact, which was then observed again in the language of the $\Sc$-cone by Katth\"an, Theobald, and the third author in \cite{Katthaen:Naumann:Theobald:UnifiedFramework}.
\end{enumerate}

\subsection{The Signed SONC Cone}

As a next step, motivated by our approach from optimization, we make a restriction when investigating the SONC cone. 
For a fixed exponential sum $f$, which we intend to minimize, we have additional information on the signs of the coefficients of $f$. 
Since every coefficient corresponds to an element in $A$ due to the isomorphism $\R^{d} \simeq \R^A$ described above, we obtain a decomposition
\begin{align}
	A \ = \ \Mosq\cup \Neg \label{Equation:DecompositionOfSupport}
\end{align}
with disjoint sets $\emptyset\ne \struc{\Mosq} \subseteq \R^n$, corresponding to positive coefficients $c_{\alpb}$, and $\struc{\Neg} \subseteq \R^n$ corresponding to the remaining nonpositive coefficients $c_{\betab}$ in the exponential sum that we consider. 
Thus, we represent exponential sums in this case as
\begin{align}\label{Definition:SignedExponentialSum}
	\struc{f}=\sum_{{\alpb} \in \Mosq} c_{{\alpb}} e^{\langle \xb,{{\alpb}}\rangle }+ \sum_{{\betab} \in \Neg} c_{{\betab}}e^{\langle \xb,{{\betab}}\rangle} \in \R^A.
\end{align}

If we minimize a given function $f$ using the SONC approach, then we restrict to circuits respecting the sign-pattern indicated by $f$. This is the common, tractable approach used by various authors in previous works, e.g., \cite{Dressler:Iliman:deWolff:FirstConstrained, Iliman:deWolff:FirstGP, Murray:Chandrasekaran:Wierman:SoncSage, Murray:Chandrasekaran:Wierman:PartialDual}; it motivates the following definition.

\begin{definition}[Signed SONC cone]\label{Definition:SignedSONCCone}
	Let $A \subseteq \R^n$ be a finite set joint with a decomposition $A = \Mosq \cup \Neg$ in the sense of \cref{Equation:DecompositionOfSupport}. Then the \struc{\textit{signed SONC cone}} $\struc{\CS}$ is the cone of all functions that can be written as a sum of nonnegative {\circuit}s of the form \cref{Definition:SignedExponentialSum} or as nonnegative exponential monomials with support in $\Mosq$. In other words, $\CS$ is the intersection of $\genCS$ with a particular orthant indicated by the pair $(\Mosq,\Neg)$.
	We denote the special case $A^- = \{\betab\}$ as $\struc{\circuitCS}$.
\end{definition}

In fact, by using a generalization of the circuit number and the subsequent notation, we can refine the representation of $\circuitCS$.

\begin{definition} \label{not:Lambda}
	For a non-empty finite set $\Mosq \subseteq \R^n$ and ${\betab} \in \R^n$ let $\Lambda(\Mosq,{\betab})$ be the polytope
	\begin{align}
	\struc{\Lambda(\Mosq, {\betab})} \ = \
	\left\{ \Vector{\lam} \in \R^{\Mosq}_{\ge 0} \quantify \sum_{{\alpb}\in \Mosq} \lam_{{\alpb}} {\alpb}
	= {\betab},\ \sum_{{\alpb}\in\Mosq}\lam_{{\alpb}} = 1\right\}.
	\label{Equation:PolytopeOfConvexCombinations}
	\end{align}
\end{definition}

The polytope $\Lambda(\Mosq, {\betab})$ is nonempty if and only if ${\betab}$ is contained in the convex hull of $\Mosq$ and ${\Lambda(\Mosq, {\betab})}$ consists of a single element whenever the elements in $\Mosq$ are affinely independent.
Particularly, $\Vector{\lambda} \in \Lambda(\Mosq,\betab)$ is, in general, not unique for functions in $\circuitCS$.

Using \cref{Equation:PolytopeOfConvexCombinations}, we may express $\circuitCS$ as follows:

\begin{theorem}[\cite{Katthaen:Naumann:Theobald:UnifiedFramework}, Theorem 2.7] \label{thm:oddImplication} Let $A= \Mosq\cup \{\betab\}$ be defined as in \cref{Equation:DecompositionOfSupport}. The signed SONC cone is the set
	\begin{align*}
	& \circuitCS \ = \ \left\{\sum_{\alpb\in \Mosq} c_{\alpb} \expalpha + 
	c_{\betab} \expbeta \ : \ 
	\begin{array}{c}
		\text{there exists } \Vector{\lam} \in \Lambda(\Mosq, \betab) \text{ such that} \\
	\prod\limits_{\alpb \in \Mosq: \lambda_{\alpb} >0} \left(\frac{c_{\alpb}}{\lambda_{\alpb}}\right)^{\lambda_{\alpb}} \ge -c_{\betab}
	\end{array}
	\right\}.
	\end{align*}
\end{theorem}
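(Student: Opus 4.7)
Both inclusions rest on the weighted arithmetic--geometric mean inequality, and the forward direction additionally relies on the known coincidence of sparse nonnegativity and SONC representability for functions carrying at most one negative term.

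For the inclusion ``$\supseteq$'', suppose $\Vector{\lambda}\in\Lambda(\Mosq,\betab)$ satisfies $\prod_{\alpb:\,\lambda_{\alpb}>0}(c_{\alpb}/\lambda_{\alpb})^{\lambda_{\alpb}}\geq -c_{\betab}$. Applying weighted AM--GM with weights $\lambda_{\alpb}$ to the quantities $c_{\alpb}e^{\langle \xb,\alpb\rangle}/\lambda_{\alpb}$ and using $\sum_{\alpb}\lambda_{\alpb}\alpb=\betab$ yields the pointwise estimate
\begin{equation*}
\sum_{\alpb\in\Mosq}c_{\alpb}e^{\langle \xb,\alpb\rangle}\ \geq\ \prod_{\alpb:\,\lambda_{\alpb}>0}\left(\frac{c_{\alpb}}{\lambda_{\alpb}}\right)^{\!\lambda_{\alpb}} e^{\langle \xb,\betab\rangle}\ \geq\ -c_{\betab}\,e^{\langle \xb,\betab\rangle},
\end{equation*}
so that $f\geq 0$ on $\R^n$. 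Since the only possibly negative coefficient of $f$ sits at $\betab$, the known equivalence between sparse nonnegativity and SONC representability for single-negative-term functions --- due to Reznick \cite{Reznick:AGI} in the AGI-form case, generalized by Wang \cite{Wang:AtMostOneNegativeTerm}, and recast in the SAGE framework by Chandrasekaran, Murray, and Wierman \cite{Murray:Chandrasekaran:Wierman:SoncSage} --- gives $f\in\circuitCS$.

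For the inclusion ``$\subseteq$'', suppose $f\in\circuitCS$ with $c_{\betab}<0$ (the case $c_{\betab}=0$ is trivial). Write $f=\sum_k f_k+g$, where each $f_k$ is a nonnegative circuit function with negative coefficient $c_{\betab}^{(k)}\leq 0$ at $\betab$ and positive coefficients $c_{\alpb}^{(k)}\geq 0$ supported on an affinely independent set $\Mosq^{(k)}\subseteq\Mosq$ with $\betab\in\conv(\Mosq^{(k)})$, and $g$ is a nonnegative sum of exponential monomials supported in $\Mosq$. Let $\Vector{\lambda}^{(k)}$ denote the unique barycentric coordinates of $\betab$ relative to $\Mosq^{(k)}$, extended by zero on $\Mosq\setminus\Mosq^{(k)}$; \cref{Theorem:CircuitsNonnegativity} yields
\begin{equation*}
|c_{\betab}^{(k)}|\ \leq\ \prod_{\alpb}\left(\frac{c_{\alpb}^{(k)}}{\lambda_{\alpb}^{(k)}}\right)^{\!\lambda_{\alpb}^{(k)}}\quad\text{for every $k$}.
\end{equation*}
Set $\mu_k=|c_{\betab}^{(k)}|/|c_{\betab}|$ and $\lambda_{\alpb}=\sum_k\mu_k\lambda_{\alpb}^{(k)}$. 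Then $\sum_k\mu_k=1$, and a direct check confirms $\Vector{\lambda}\in\Lambda(\Mosq,\betab)$. The required bound, equivalent to $\sum_{\alpb}\lambda_{\alpb}\log(c_{\alpb}/\lambda_{\alpb})\geq\log(-c_{\betab})$, follows by applying the log-sum inequality coordinatewise to the pairs $(\mu_k\lambda_{\alpb}^{(k)},c_{\alpb}^{(k)})$, summing over $\alpb$, and combining the resulting estimates with the circuit-number bounds above via $\sum_{\alpb}\lambda_{\alpb}^{(k)}=1$ and $c_{\alpb}\geq\sum_k c_{\alpb}^{(k)}$.

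The main technical obstacle lies in the direction ``$\supseteq$'': AM--GM hands over pointwise nonnegativity for free, but extracting an explicit SONC certificate from a given $\Vector{\lambda}$ is delicate. The naive Carathéodory-based allocation of the coefficients across the vertices of $\Lambda(\Mosq,\betab)$ need not produce nonnegative circuits term by term, since the optimal $\Vector{\lambda}$ maximizing $\prod(c_{\alpb}/\lambda_{\alpb})^{\lambda_{\alpb}}$ is typically attained in the relative interior of $\Lambda$. The correct reallocation is precisely the content of the classical SAGE $=$ SONC equivalence invoked above.
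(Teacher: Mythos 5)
The paper does not prove this statement itself: it is imported verbatim as Theorem~2.7 of \cite{Katthaen:Naumann:Theobald:UnifiedFramework}, so there is no in-paper argument to compare against line by line. Your proof is, as far as I can check, correct, and it leans only on results the paper itself puts on the table. For ``$\supseteq$'', the weighted AM--GM step correctly yields pointwise nonnegativity of the AGE function, and the passage from ``nonnegative with at most one negative term'' to an actual sum of nonnegative {\circuit}s is exactly the SAGE${}={}$SONC equivalence of Wang \cite{Wang:AtMostOneNegativeTerm} and Murray--Chandrasekaran--Wierman \cite{Murray:Chandrasekaran:Wierman:SoncSage}, which the paper also invokes (see the discussion around \cref{cor:mcwrep}); you are right that this is the genuinely hard step and that it is not obtained by naively splitting $\Vector{\lambda}$ over vertices of $\Lambda(\Mosq,\betab)$. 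For ``$\subseteq$'', your argument with weights $\mu_k=|c^{(k)}_{\betab}|/|c_{\betab}|$ is a complete, self-contained derivation: writing $\ln|c_{\betab}|=\sum_k\mu_k\ln\bigl(|c^{(k)}_{\betab}|/\mu_k\bigr)$, bounding each $\ln|c^{(k)}_{\betab}|$ by the circuit-number inequality of \cref{Theorem:CircuitsNonnegativity}, and applying the log-sum inequality for each fixed $\alpb$ to the pairs $\bigl(\mu_k\lambda^{(k)}_{\alpb},c^{(k)}_{\alpb}\bigr)_k$ indeed gives $\sum_{\alpb}\lambda_{\alpb}\ln(c_{\alpb}/\lambda_{\alpb})\ge\ln|c_{\betab}|$. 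Two minor points, neither of which affects the substance: your dismissal of $c_{\betab}=0$ as trivial silently assumes $\Lambda(\Mosq,\betab)\neq\emptyset$ (if $\betab\notin\conv(\Mosq)$ the right-hand set is empty while $f$ is still a sum of nonnegative exponential monomials --- an edge case already latent in the cited statement); and you should state explicitly that each circuit in the decomposition has its unique possibly negative coefficient at $\betab$, which follows from the sign pattern in \cref{Definition:SignedSONCCone} together with \cref{Equation:VerticesAreNonnegative}.
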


Note here that nonnegativity of an AGE function can be certified directly by using $\Theta_f$. There is no need to decompose it into a sum of nonnegative {\circuit}s.

\section{The Dual SONC Cone}

In what follows, we study the dual SONC cone to show containment of the dual in the primal SONC cone (\cref{Sec:DualContainment}) and to obtain a fast linear approximation for global optimization (\cref{sec:LPapprox}).

Due to our goals in this article, we discuss here duality with respect to the signed SONC cone. However, everything generalizes to the full SONC cone immediately.

\subsection{Representations of the Dual SONC Cone}

\begin{definition}[The dual signed SONC cone]\label{Definition:DualSONCCone}
	For an exponential sum $f \in \R^A$ with coefficient vector $\Vector{c}\in \R^A$ we consider the \textit{\struc{natural duality pairing}}
	\begin{align*}
	\struc{\dualvar(f)} \ = \ \sum\limits_{{\alpb}\in \Mosq}\dualdingle_{{\alpb}} c_{{\alpb}} +\sum\limits_{{\betab} \in \Neg} \dualdingle_{\betab} c_{{\betab}} \in \ \R,	
	\end{align*}
	where, as in the primal case, $\dualvar(\cdot) \in \widecheck \R^{A}$ is canonically identified with its (dual) coefficient vector $\struc{\dualvar}$, and hence $\widecheck \R^A \simeq \widecheck \R^d$.
	Using this definition, the \struc{\textit{dual signed SONC cone}} is defined as the set
\begin{align*}
	\struc{\dualCS} \ = \ \left\{\dualvar\in \widecheck \R^A: \dualvar(f)\ge 0 \text{ for all } f\in\CS\right\}.
\end{align*} 
	For brevity, we refer to this cone simply as the \struc{\textit{dual SONC cone}}.
\end{definition}

The following theorem provides two representations of this cone. We need the first one to show containment of the dual SONC cone in the primal one, and the  second representation to obtain the linear program approximating the solution of our global optimization problem \cref{Intro:KeyProblem}.

\begin{theorem}[The dual SONC cone]\label{thm:dualcone}
	Let $A= \Mosq\cup \Neg$ be as in \cref{Equation:DecompositionOfSupport}.
	The following sets are equal.
	\begin{enumerate}
		\item $\dualCS$,
		\item \begin{align*}
			 & \left\{ \dualvar \in \widecheck \R^A :
			 \begin{array}{l}
			 	\text{for all } \; \alpb \in \Mosq, \dualdingle_{{\alpb}} \ge 0 \text{; and for all } \; {\betab} \in \Neg, \\
			 	\text{for all } \; \Vector{\lambda} \in \Lambda(\Mosq, {\betab}) \ , \ \ln (|\dualdingle_{\betab}|) \le \sum\limits_{{\alpb}\in \Mosq} \lam_{{\alpb}} \ln(\dualdingle_{{\alpb}})  
			 \end{array}
			   \right\},
		\end{align*}
		\item \begin{align*}
			\left\{\dualvar \in \widecheck \R^A :
			\begin{array}{l}
				\text{for all } \; {\alpb} \in \Mosq, \dualdingle_{{\alpb}} \geq 0; \text{ and for all } \; {\betab}\in\Neg \\ 
				\text{there exists } \Vector{\tau}\in\R^n, \ln\left(\frac{|\dualdingle_{\betab}|}{\dualdingle_{{\alpb}}}\right)\leq ({\alpb}-{\betab})^T\Vector{\tau} \; \text{ for all } \; {\alpb} \in \Mosq 
			\end{array}
			\right\}.
		\end{align*}
	\end{enumerate}
	\label{thm:dualScone}
\end{theorem}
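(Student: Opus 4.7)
The plan is to prove the two equivalences separately: (1)$\Leftrightarrow$(2) by testing the natural dual pairing against the generators of $\CS$ and invoking weighted AM--GM, and (2)$\Leftrightarrow$(3) by linear programming duality over the polytope $\Lambda(\Mosq, \betab)$.

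For (1)$\Leftrightarrow$(2), I would first observe that $\CS$ is the conical hull of the nonnegative exponential monomials supported in $\Mosq$ together with the sub-cones $\circuitCS$ as $\betab$ ranges over $\Neg$, so $\dualvar \in \dualCS$ if and only if $\dualvar(f) \ge 0$ on every such generator. Pairing against a single monomial forces $\dualdingle_{\alpb} \ge 0$ for every $\alpb \in \Mosq$. For a fixed $\betab \in \Neg$, \cref{thm:oddImplication} characterizes the elements of $\circuitCS$ by $c_{\alpb} \ge 0$ together with the AM--GM bound $|c_{\betab}| \le \prod_{\alpb}(c_{\alpb}/\lam_{\alpb})^{\lam_{\alpb}}$ for some $\Vector{\lam} \in \Lambda(\Mosq, \betab)$. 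Minimizing $\dualvar(f)$ over the admissible $c_{\betab}$ (which is extremal with sign opposite to $\dualdingle_{\betab}$) turns the dual inequality into the uniform statement
\[
\sum_{\alpb \in \Mosq} \dualdingle_{\alpb} c_{\alpb} \ \ge\ |\dualdingle_{\betab}| \prod_{\alpb \in \Mosq} (c_{\alpb}/\lam_{\alpb})^{\lam_{\alpb}}
\]
for every $\Vector{\lam} \in \Lambda(\Mosq, \betab)$ and every $c_{\alpb} \ge 0$. The $(\Leftarrow)$ direction is weighted AM--GM applied to the numbers $\dualdingle_{\alpb} c_{\alpb}/\lam_{\alpb}$; the $(\Rightarrow)$ direction is verified by plugging in $c_{\alpb} = \lam_{\alpb}/\dualdingle_{\alpb}$, which saturates AM--GM. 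Taking logarithms rephrases the resulting bound $|\dualdingle_{\betab}| \le \prod_{\alpb} \dualdingle_{\alpb}^{\lam_{\alpb}}$ as (2).

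For (2)$\Leftrightarrow$(3), I would read (2) as saying that, for each $\betab$, the linear program
\[
\min_{\Vector{\lam}}\ \sum_{\alpb} \lam_{\alpb} \ln \dualdingle_{\alpb} \quad \text{s.t.}\ \sum_{\alpb} \lam_{\alpb} \alpb = \betab,\ \sum_{\alpb} \lam_{\alpb} = 1,\ \Vector{\lam} \ge 0,
\]
has optimal value at least $\ln|\dualdingle_{\betab}|$. Since $\Lambda(\Mosq, \betab)$ is a bounded polytope, LP strong duality applies and transforms this into the existence of $(\Vector{\tau}, \tau_0) \in \R^n \times \R$ satisfying $\alpb^\top \Vector{\tau} + \tau_0 \le \ln \dualdingle_{\alpb}$ for every $\alpb \in \Mosq$ together with $\betab^\top \Vector{\tau} + \tau_0 \ge \ln|\dualdingle_{\betab}|$. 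Eliminating $\tau_0$ between these inequalities yields $(\betab - \alpb)^\top \Vector{\tau} \ge \ln(|\dualdingle_{\betab}|/\dualdingle_{\alpb})$ for every $\alpb \in \Mosq$, and replacing $\Vector{\tau}$ by $-\Vector{\tau}$ produces (3) verbatim.

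The main obstacle I anticipate is the careful handling of degenerate cases. When some $\dualdingle_{\alpb}$ vanishes, the primal LP is unbounded below and condition (2) forces $\dualdingle_{\betab} = 0$; when $\Lambda(\Mosq, \betab)$ is empty (i.e.\ $\betab \notin \conv(\Mosq)$), conditions (2) and (3) are vacuous, with (3) witnessed by any $\Vector{\tau}$ separating $\betab$ from $\Mosq$. Each such corner case must be reconciled with the logarithmic conventions fixed in \cref{Section:Preliminaries} so that the three formulations agree on the boundary of the cone.
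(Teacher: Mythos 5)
Your proposal is correct in substance, but it takes a genuinely more self-contained route than the paper. The paper's proof is essentially a two-step citation: it invokes \cref{cor:mcwrep} (which rests on \cref{mcw:numberofterms} of Murray--Chandrasekaran--Wierman) to write $\dualCS = \bigcap_{\betab \in \Neg} \dualcircuitCS$, and then quotes \cref{lem:dualAGcone} of Katth\"an--Naumann--Theobald for both descriptions of $\dualcircuitCS$. You instead re-derive everything: the reduction to the single-$\betab$ subcones comes straight from the generator description of $\CS$ (note that for the \emph{dual} statement you never need the nontrivial support-preserving decomposition of \cref{mcw:numberofterms} --- the dual of a cone generated by a union is always the intersection of the duals, so your route actually uses less); the description (2) of $\dualcircuitCS$ is obtained by an explicit extremality-plus-AM--GM computation against \cref{thm:oddImplication}; and the passage (2)$\Leftrightarrow$(3) is LP strong duality over the compact polytope $\Lambda(\Mosq,\betab)$, with the multiplier $\tau_0$ of the normalization $\sum_{\alpb}\lam_{\alpb}=1$ eliminated. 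What the paper's route buys is brevity and reliance on established results; what yours buys is transparency --- in particular it makes visible that the auxiliary variables $\Vector{\tau}^{(\betab)}$ of \cref{LP-Relax1} and \cref{LP-Relax2} are exactly the LP dual multipliers of the barycentric constraints, which the paper never explains.

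Two points should be made explicit before this can stand as a complete proof. First, your extremality step ``$c_{\betab}$ extremal with sign opposite to $\dualdingle_{\betab}$'' tacitly assumes that the generators of $\circuitCS$ are precisely those with $|c_{\betab}| \le \Theta_f$, i.e.\ that $c_{\betab}$ ranges symmetrically over $[-\Theta_f,\Theta_f]$; under the literal reading of \cref{thm:oddImplication} (only $-c_{\betab}\le\Theta_f$) or under the orthant restriction $c_{\betab}\le 0$ of \cref{Definition:SignedSONCCone}, the minimization over $c_{\betab}$ gives a one-sided condition on $\dualdingle_{\betab}$ rather than one on $|\dualdingle_{\betab}|$. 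The symmetric reading is the one consistent with \cref{lem:dualAGcone} and with \cref{rmk:dualsmallerprimal}, but it must be stated, since the whole appearance of $|\dualdingle_{\betab}|$ in (2) and (3) hinges on it. Second, the degenerate cases you flag at the end (some $\dualdingle_{\alpb}=0$, or $\Lambda(\Mosq,\betab)=\emptyset$, or $\Neg=\emptyset$, which the paper treats separately) are exactly where strong duality and the logarithmic conventions need to be checked by hand; as written these are acknowledged but not resolved.
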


To prove these representations, we adapt the subsequent theorem from~\cite{Murray:Chandrasekaran:Wierman:SoncSage} to our setting, which basically states that a function in the SONC cone supported on $A= \Mosq\cup \Neg$ can be decomposed into a sum of nonnegative AGE functions supported on $\Mosq\cup \{{\betab}\}$, $\betab \in \Neg$, i.e., the decomposition only uses the support $A$ and there is only one summand per element in $\Neg$.

\begin{theorem}[\cite{Murray:Chandrasekaran:Wierman:SoncSage}, Theorem $2$]\label{mcw:numberofterms}
  Let $f \in \CS$
  with a vector of coefficients $\Vector{c}$. 
  Let $  \Neg \neq \emptyset$.
  Then there exist $\{f^{(\Vector{\beta})} : \Vector{\beta} \in \Neg\} \subseteq \R^A $ with coefficient vectors $\{\Vector{c}^{(\Vector{\beta}) }  : \Vector{\beta} \in \Neg \}$ satisfying
  \begin{enumerate}
    \item $\Vector{c}= \sum_{\Vector{\beta} \in \Neg} \Vector{c}^{(\Vector{\beta})}$ ,
    \item $f^{(\Vector{\beta})} \in \circuitCS$, and 
    \item $c_{\Vector{\alpha}}^{(\Vector{\beta})} = 0$ for all $\Vector{\alpha} \neq \Vector{\beta}$ in $\Neg$.
  \end{enumerate}
\end{theorem}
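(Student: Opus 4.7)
The plan is to directly unpack \cref{Definition:SignedSONCCone}. Any $f \in \CS$ admits by definition a representation $f = \sum_{i\in I} g_i + \sum_{j \in J} m_j$, where each $g_i$ is a nonnegative circuit function of the form \cref{Definition:SignedExponentialSum} and each $m_j$ is a nonnegative exponential monomial supported in $\Mosq$. Since the support of a circuit function is a circuit whose convex hull is a simplex, it has exactly one non-vertex exponent; as vertex exponents must carry positive coefficients by \cref{Equation:VerticesAreNonnegative}, the unique element (if any) of $\supp(g_i) \cap \Neg$ is this interior exponent. Let $\betab(g_i) \in \Neg$ denote it whenever it exists.

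Next, I group the summands by their (at most one) negative exponent. Since $\Neg \neq \emptyset$, I fix some $\betab_0 \in \Neg$ once and for all. For each $\betab \in \Neg$ I set $f^{(\betab)} := \sum_{i \in I : \betab(g_i) = \betab} g_i$, and additionally absorb into $f^{(\betab_0)}$ both the exponential monomials $m_j$ and any circuit summands $g_i$ whose support happens to lie entirely in $\Mosq$. By construction, every summand appearing in $f^{(\betab)}$ is a nonnegative circuit function or monomial supported in $\Mosq \cup \{\betab\}$, so $f^{(\betab)} \in \circuitCS$ follows directly from \cref{Definition:SignedSONCCone} applied with $A^- = \{\betab\}$; this yields property (2).

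Property (1) is then immediate: every summand of the original decomposition has been assigned to exactly one $f^{(\betab)}$, so summing the coefficient vectors reproduces $\Vector{c}$. Property (3) is also built into the construction, since $\supp(f^{(\betab)}) \subseteq \Mosq \cup \{\betab\}$ forces $c^{(\betab)}_{\Vector{\alpha}} = 0$ for every $\Vector{\alpha} \in \Neg \setminus \{\betab\}$.

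The whole argument is essentially combinatorial bookkeeping; the only conceptual input is the fact that a circuit whose convex hull is a simplex has a unique non-vertex exponent, and the assumption $\Neg \neq \emptyset$ is needed solely to have an index $\betab_0$ in which to collect the summands without a negative term. Accordingly, I do not anticipate a serious technical obstacle beyond isolating this geometric observation and handling the two cases (circuits with and without a term in $\Neg$) uniformly in the grouping step.
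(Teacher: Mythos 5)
The paper itself offers no proof of this statement --- it is imported verbatim from \cite[Theorem 2]{Murray:Chandrasekaran:Wierman:SoncSage} --- so the only question is whether your blind argument actually proves it. It does, but only under the most literal reading of \cref{Definition:SignedSONCCone}: namely, that every $f\in\CS$ comes equipped with a decomposition into nonnegative circuit functions each of which is \emph{individually} ``of the form \cref{Definition:SignedExponentialSum}'', i.e., has positive coefficients only at points of $\Mosq$, nonpositive coefficients only at points of $\Neg$, and support inside $A$. Granting that, your bookkeeping is fine: a circuit whose convex hull is a simplex has exactly one non-vertex exponent, so each summand meets $\Neg$ in at most that one point, the grouping by that point is well defined, absorbing the monomials and the all-positive circuits into one fixed $f^{(\betab_0)}$ is legitimate, and properties (1)--(3) drop out.

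The gap is that this reading places the entire content of the theorem into the hypothesis. What \cite[Theorem 2]{Murray:Chandrasekaran:Wierman:SoncSage} actually earns is a \emph{cancellation-elimination} statement: if one only knows $f\in\genCS$ (per \cref{Definition:SONCCone}) together with the sign pattern of its coefficients --- which is the ``intersection of $\genCS$ with an orthant'' reading in the second sentence of \cref{Definition:SignedSONCCone}, and is what one actually has in hand when given a function to optimize --- then the circuit summands of a SONC decomposition need not respect that sign pattern at all. A summand may place its negative interior coefficient at a point of $\Mosq$, or at a point where $f$ has coefficient zero, with the offending terms cancelling against positive contributions from other summands. The theorem asserts that all such cancellations can be removed, leaving exactly one summand per element of $\Neg$ with its sole negative term located there; that is precisely the step your first sentence takes for granted. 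So either the statement you prove is a near-tautology (first reading), or your proof is missing its main argument (second reading); in neither case does the proposal recover the substance of the cited result, which is why the paper cites it rather than deriving it from the definition.
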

  
We obtain the following representation of the SONC cone and its dual.

\begin{corollary}\label{cor:mcwrep}~
	Let $A^- \neq \emptyset$. The following statements hold.
	\begin{enumerate}
		\item The SONC cone is the Minkowski sum
		\begin{align*}
			\CS=\sum\limits_{{\betab}\in\Neg} \circuitCS.
		\end{align*}
		\item The dual SONC cone is the set
		\begin{align*}
			\dualCS=\bigcap\limits_{{\betab}\in\Neg} \dualcircuitCS.
		\end{align*}
	\end{enumerate}
\end{corollary}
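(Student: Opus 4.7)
My plan is to derive both statements directly from \cref{mcw:numberofterms} together with standard convex-cone duality.

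For part (1), the inclusion $\sum_{\betab \in \Neg} \circuitCS \subseteq \CS$ is the straightforward direction. Each summand $\circuitCS$, when embedded into $\R^A$ by extending coefficient vectors with zeros on $\Neg \setminus \{\betab\}$, sits inside $\CS$ because any nonnegative circuit function supported on $\Mosq \cup \{\betab\}$ is in particular a nonnegative circuit function with sign pattern respecting the decomposition $A = \Mosq \cup \Neg$. Since $\CS$ is a convex cone and thus closed under Minkowski sums with itself, this inclusion follows. The reverse inclusion $\CS \subseteq \sum_{\betab \in \Neg} \circuitCS$ is exactly the content of \cref{mcw:numberofterms}: given $f \in \CS$ with coefficient vector $\Vector{c}$, the theorem produces coefficient vectors $\Vector{c}^{(\betab)}$ of functions $f^{(\betab)} \in \circuitCS$ such that $\Vector{c} = \sum_{\betab \in \Neg} \Vector{c}^{(\betab)}$, which is precisely a decomposition in the Minkowski sum.

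For part (2), I would invoke the standard convex-cone identity
\begin{align*}
\Bigl(\sum_{i} C_i\Bigr)^{\vee} \ = \ \bigcap_i \widecheck{C_i},
\end{align*}
which holds for any finite family of convex cones $C_i$ in the same ambient space, viewing each $\circuitCS$ as a subcone of $\R^A$ through the zero-padding embedding described above. Applying this to the Minkowski sum representation from part (1) yields
\begin{align*}
\dualCS \ = \ \Bigl(\sum_{\betab \in \Neg} \circuitCS\Bigr)^{\vee} \ = \ \bigcap_{\betab \in \Neg} \dualcircuitCS,
\end{align*}
which is the claim. The dual pairing on each $\dualcircuitCS$ is understood as the restriction of $\dualvar(\cdot)$ to coefficients indexed by $\Mosq \cup \{\betab\}$, which is consistent with \cref{Definition:DualSONCCone}.

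The main point to be careful about is the embedding of the cones $\circuitCS$, which live naturally in $\R^{\Mosq \cup \{\betab\}}$, into the ambient space $\R^A$ in which $\CS$ lives. Once this identification is fixed (by padding with zeros in the coefficients indexed by $\Neg \setminus \{\betab\}$), both parts become essentially formal: part (1) is a direct translation of \cref{mcw:numberofterms}, and part (2) is immediate from the standard duality between Minkowski sums and intersections of convex cones. I would expect no serious technical obstacle beyond making this embedding explicit.
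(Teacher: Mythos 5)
Your proposal is correct and follows essentially the same route as the paper: part (1) is read off from \cref{mcw:numberofterms} (the paper leaves the easy inclusion implicit, which you spell out via the zero-padding embedding), and part (2) invokes exactly the standard fact that Minkowski sum and intersection are dual operations on convex cones, which the paper cites from Schneider's book. No substantive difference.
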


\begin{proof} The first statement is a direct consequence of~\cref{mcw:numberofterms}.
For the second statement note that Minkowski sum and intersection are dual operations; see, e.g.,  \cite[Theorem 1.6.3]{schneider-book}.
\end{proof}

In particular, this corollary tells us that every nonnegative AGE function is a sum of nonnegative {\circuit}s.

In order to finally prove \cref{thm:dualcone} we need another statement, which essentially combines Lemma 3.6 and a part of the proof of Proposition 3.9  in~\cite{Katthaen:Naumann:Theobald:UnifiedFramework}.

\begin{lemma}[\cite{Katthaen:Naumann:Theobald:UnifiedFramework}]\label{lem:dualAGcone} 
	For ${\betab} \in \Neg \ne \emptyset$ ,
	the dual cone of nonnegative {\circuit}s $\dualcircuitCS$ consists of those $\dualvar \in \widecheck \R^{A}$, where $\dualdingle_{\alpb} \geq 0$ for all ${\alpb} \in \Mosq$, $\dualdingle_{\alpb}=0$ for all $\alpb\in\Neg\setminus\{\betab\}$ and one of the following equivalent conditions hold:
		\begin{enumerate}
		\item $\ln(|\dualdingle_{\betab}|) \leq \sum_{{\alpb}\in \Mosq} \lambda_{{\alpb}} \ln(\dualdingle_{\alpb})$ for all $\Vector{\lam} \in \Lambda(\Mosq, {\betab})$.
		\item There exists $\Vector{\tau}\in\R^n$ such that for all ${\alpb}\in\Mosq \colon \ln\left(\frac{|\dualdingle_{\betab}|}{\dualdingle_{{\alpb} }}\right) \leq ({\alpb} - {\betab})^T \Vector{\tau}.$
		\end{enumerate}
\end{lemma}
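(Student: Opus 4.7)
The plan is to compute $\dualcircuitCS$ directly from its definition using the characterization of $\circuitCS$ in \cref{thm:oddImplication}, and then to establish the equivalence of $(1)$ and $(2)$ via linear programming duality.

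First, I would extract the sign conditions on $\dualvar$. Testing against the nonnegative exponential monomial $\expalpha\in\circuitCS$, which belongs to $\circuitCS$ for every $\alpb\in\Mosq$ by \cref{Definition:SignedSONCCone}, forces $\dualdingle_{\alpb}\ge 0$. Applying $\dualvar$ to the nonnegative circuit functions $\epsilon\sum_{\alpb}\lambda_{\alpb}\expalpha+\expbeta\in\circuitCS$ (whose circuit inequality $\epsilon\ge -1$ holds trivially for $\epsilon>0$) and letting $\epsilon\downarrow 0$ forces $\dualdingle_{\betab}\ge 0$, so $|\dualdingle_{\betab}|=\dualdingle_{\betab}$. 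The entries $\dualdingle_{\alpb'}$ for $\alpb'\in\Neg\setminus\{\betab\}$ do not enter the pairing against any $f\in\circuitCS$, and the convention of the statement sets them to zero. Now for $f=\sum_{\alpb}c_{\alpb}\expalpha+c_{\betab}\expbeta\in\circuitCS$ with $c_{\alpb}\ge 0$ and $\Vector{\lambda}\in\Lambda(\Mosq,\betab)$ saturating the circuit inequality at $-c_{\betab}=\prod_{\alpb}(c_{\alpb}/\lambda_{\alpb})^{\lambda_{\alpb}}$, setting $x_{\alpb}:=\dualdingle_{\alpb}c_{\alpb}/\lambda_{\alpb}$ and applying weighted AM--GM yields
\[
\sum_{\alpb}\dualdingle_{\alpb}c_{\alpb}=\sum_{\alpb}\lambda_{\alpb}x_{\alpb}\;\ge\;\prod_{\alpb}x_{\alpb}^{\lambda_{\alpb}}=\Big(\prod_{\alpb}\dualdingle_{\alpb}^{\lambda_{\alpb}}\Big)\prod_{\alpb}(c_{\alpb}/\lambda_{\alpb})^{\lambda_{\alpb}},
\]
so it suffices to demand $\prod_{\alpb}\dualdingle_{\alpb}^{\lambda_{\alpb}}\ge\dualdingle_{\betab}$, i.e.\ condition $(1)$. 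The equality case of AM--GM, attained by $c_{\alpb}:=\lambda_{\alpb}/\dualdingle_{\alpb}$ when $\dualdingle_{\alpb}>0$, shows necessity.

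To obtain $(1)\Leftrightarrow(2)$ I would invoke LP duality. The linear program
\[
\min_{\Vector{\lambda}\in\Lambda(\Mosq,\betab)}\;\sum_{\alpb\in\Mosq}\lambda_{\alpb}\ln(\dualdingle_{\alpb})
\]
has dual (in variables $(\tau_0,\Vector{\tau})\in\R\times\R^n$)
\[
\max\; \tau_0+\Vector{\tau}^T\betab\quad\text{s.t.}\quad \tau_0+\Vector{\tau}^T\alpb\le\ln(\dualdingle_{\alpb})\ \ \text{for all}\ \ \alpb\in\Mosq,
\]
and both programs are feasible (the primal because $\betab\in\conv(\Mosq)$), so strong duality gives equality of the optima. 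Condition $(1)$ therefore holds if and only if there exists dual-feasible $(\tau_0,\Vector{\tau})$ with $\tau_0+\Vector{\tau}^T\betab\ge\ln(|\dualdingle_{\betab}|)$; subtracting this from the dual constraints and replacing $\Vector{\tau}$ by $-\Vector{\tau}$ yields $(2)$. Conversely, multiplying the inequality in $(2)$ by $\lambda_{\alpb}$ and summing over $\alpb$ (using $\sum\lambda_{\alpb}=1$ and $\sum\lambda_{\alpb}\alpb=\betab$) gives $(1)$ directly.

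The hard part will be the degenerate case where some $\dualdingle_{\alpb}=0$, so $\ln(\dualdingle_{\alpb})=-\infty$ in both the AM--GM and LP duality steps. I would handle this either by a perturbation argument ($\dualdingle_{\alpb}\downarrow 0$ combined with closedness of $\dualcircuitCS$), or by reducing to the nondegenerate sub-LP on the coordinates where $\dualdingle_{\alpb}>0$, relying on the conventions for $\ln 0$ and $0\ln 0$ introduced in \cref{Section:Preliminaries}.
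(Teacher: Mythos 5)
First, a framing point: the paper does not prove this lemma at all --- it is imported from \cite{Katthaen:Naumann:Theobald:UnifiedFramework} (Lemma~3.6 together with part of the proof of Proposition~3.9 there), so any self-contained argument is necessarily a different route. Most of yours is the right one. Pairing against the exponential monomials $\expalpha$, $\alpb\in\Mosq$, to get $\dualdingle_{\alpb}\ge 0$ is correct; the weighted AM--GM computation showing that condition $(1)$ suffices for $\dualvar(f)\ge 0$ on all of $\circuitCS$ is the standard argument; and the LP-duality derivation of $(1)\Leftrightarrow(2)$ --- dualizing $\min_{\Vector{\lambda}\in\Lambda(\Mosq,\betab)}\sum_{\alpb}\lambda_{\alpb}\ln(\dualdingle_{\alpb})$, absorbing the free variable $\tau_0$, and the converse by convex combination --- is correct. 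The degeneracy $\dualdingle_{\alpb}=0$ that you flag is handled adequately by either of the two fixes you propose.

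The genuine gap is the step that forces $\dualdingle_{\betab}\ge 0$, and it is load-bearing for the absolute value in condition $(1)$. You test $\dualvar$ against $\epsilon\sum_{\alpb}\lambda_{\alpb}\expalpha+\expbeta$, but this function has a \emph{positive} coefficient at $\betab\in\Neg$, whereas \cref{Definition:SignedSONCCone} describes $\circuitCS$ as the intersection of $\genCS$ with the orthant in which the coefficient at $\betab$ is nonpositive; so your test function is not an admissible element of $\circuitCS$ and the conclusion $\dualdingle_{\betab}\ge 0$ does not follow. (The paper clearly intends negative $\dualdingle_{\betab}$ to occur in the dual cone; see \cref{rmk:dualsmallerprimal}, where $\dualdingle_1=-2$ and only the logarithmic inequality is checked.) Without that step, your extremal-circuit computation with $c_{\alpb}=\lambda_{\alpb}/\dualdingle_{\alpb}$ and $c_{\betab}=-\prod_{\alpb}(c_{\alpb}/\lambda_{\alpb})^{\lambda_{\alpb}}$ only yields the one-sided necessary condition $\dualdingle_{\betab}\le\prod_{\alpb}\dualdingle_{\alpb}^{\lambda_{\alpb}}$, not $|\dualdingle_{\betab}|\le\prod_{\alpb}\dualdingle_{\alpb}^{\lambda_{\alpb}}$: a vector with very negative $\dualdingle_{\betab}$ pairs nonnegatively with every $f\in\circuitCS$, since $c_{\betab}\le 0$ there. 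To prove the lemma exactly as stated you must either justify that signomials with $c_{\betab}>0$ belong to $\circuitCS$ (the literal reading of \cref{thm:oddImplication} would permit this, but the orthant description in \cref{Definition:SignedSONCCone} does not), or accept that the set you characterize differs from the stated one in the sign versus absolute value of $\dualdingle_{\betab}$. A smaller point of the same kind: the coordinates $\dualdingle_{\alpb}$ for $\alpb\in\Neg\setminus\{\betab\}$ never enter the pairing, so the duality computation leaves them free rather than equal to zero; ``$=0$'' can only be adopted as a normalization, not derived, and you are right to hedge there.
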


\begin{proof}[Proof of \cref{thm:dualcone}]
	In the case $\Neg = \emptyset$, $\dualCS$ only contains sums of nonnegative exponential monomials and the equality of the sets (1)-(3) is clear.
	For $A^- \ne \emptyset$, the statement follows by~\cref{cor:mcwrep} and~\cref{lem:dualAGcone}. Namely, the first representation can be deduced from (1), and  the second one from (2).
\end{proof}

\subsection{The Dual SONC Cone is Contained in the Primal SONC Cone}
\label{Sec:DualContainment}

For $A= \Mosq\cup \Neg$ defined as in \cref{Equation:DecompositionOfSupport}, we identified the dual space of exponential sums supported on $A$ with $\widecheck \R^A$. 
Now we use the reverse identification. 
For every $\dualvar\in \widecheck \R^A$ we associate a function
\begin{align}\label{idfct}
f(\xb)=\sum\limits_{{\alpb}\in \Mosq}\dualdingle_{{\alpb}}\expalpha+ \sum\limits_{{\betab}\in \Neg}\dualdingle_{\betab} \expbeta.
\end{align}
Note that {\circuit}s and {\agf}s are special cases of these functions. 
With this consideration, we identify the dual cone $\dualcircuitCS$ of nonnegative 
{\circuit}s having exponents in
$\Mosq \cup \{{\betab}\}$ with the cone of all functions of the form~\cref{idfct} having coefficients in $\dualcircuitCS$. 
In order to keep notation short, we write $\dualcircuitCS$ for this cone as well. For the cone $\dualCS$ we use the same identification with the notation $\dualCS$.

\begin{proposition}	\label{prop:primalindual}
It holds that
\begin{enumerate}
	\item $\dualcircuitCS\subseteq \circuitCS.$
	\item $\dualCS\subseteq \CS.$
\end{enumerate}
\end{proposition}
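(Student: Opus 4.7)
The plan is to establish (1) by direct comparison of the dual description in representation~(2) of \cref{thm:dualcone} with the circuit-number characterization of $\circuitCS$ in \cref{thm:oddImplication}, and then to deduce (2) from \cref{cor:mcwrep}.

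For (1), I would begin with $\nu \in \dualcircuitCS$ and the associated function $f$ under the identification \cref{idfct}. By \cref{thm:dualcone}~(2), $\nu_\alpha \ge 0$ for all $\alpha \in A^+$, and for every $\lambda \in \Lambda(A^+,\beta)$,
\[
	\ln|\nu_\beta| \ \le\ \sum_{\alpha \in A^+} \lambda_\alpha \ln \nu_\alpha.
\]
To invoke \cref{thm:oddImplication}, I would need to exhibit some $\lambda \in \Lambda(A^+,\beta)$ satisfying $\prod_\alpha (\nu_\alpha/\lambda_\alpha)^{\lambda_\alpha} \ge -\nu_\beta$. The key observation is the identity
\[
	\sum_\alpha \lambda_\alpha \ln(\nu_\alpha/\lambda_\alpha) \ =\ \sum_\alpha \lambda_\alpha \ln \nu_\alpha \ -\ \sum_\alpha \lambda_\alpha \ln \lambda_\alpha,
\]
together with nonnegativity of the Shannon entropy $-\sum_\alpha \lambda_\alpha \ln \lambda_\alpha \ge 0$ on the probability vector $\lambda$. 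Combined with the displayed dual inequality, this strengthens to $\ln|\nu_\beta| \le \sum_\alpha \lambda_\alpha \ln(\nu_\alpha/\lambda_\alpha)$, and exponentiating (using also $|\nu_\beta| \ge -\nu_\beta$) produces the circuit-number condition for \emph{any} such $\lambda$, so $f \in \circuitCS$.

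For (2), I would invoke \cref{cor:mcwrep}, which gives $\dualCS = \bigcap_{\beta \in A^-} \dualcircuitCS$ and $\CS = \sum_{\beta \in A^-} \circuitCS$. Any $\nu \in \dualCS$, viewed as a function via \cref{idfct}, then lies in each $\dualcircuitCS$, and hence by~(1) in each $\circuitCS$. Because every such cone contains the zero function, the trivial decomposition $f = f + 0 + \cdots + 0$ realizes $f$ as an element of the Minkowski sum, so $f \in \CS$.

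The step I expect to be the main obstacle is interpretive rather than computational: one must verify that the identification \cref{idfct} of $\nu$ with a function meshes correctly with $\dualcircuitCS$, in particular with the condition ``$\nu_\alpha = 0$ for $\alpha \in A^- \setminus \{\beta\}$'' from \cref{lem:dualAGcone}, and that the Minkowski-sum/intersection duality invoked in \cref{cor:mcwrep} is applied consistently across parts~(1) and~(2). A separate check is also needed for the degenerate case $\Lambda(A^+,\beta) = \emptyset$, where the dual constraint on $\nu_\beta$ is vacuous; there one argues directly that $f$ reduces to a sum of nonnegative exponential monomials and hence lies trivially in $\circuitCS$.
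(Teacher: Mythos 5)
Your proposal is correct and follows essentially the same route as the paper: part (1) is the identical computation (your Shannon-entropy identity is just a restatement of the paper's observation that $\lambda_{\alpb}\in[0,1]$ and $\ln$ is increasing, yielding $\sum_{\alpb}\lam_{\alpb}\ln(\dualdingle_{\alpb})\le\sum_{\alpb}\lam_{\alpb}\ln(\dualdingle_{\alpb}/\lam_{\alpb})=\ln(\Theta_f)$ and then \cref{thm:oddImplication}), and part (2) is the same chain of inclusions $\dualCS\subseteq\dualcircuitCS\subseteq\circuitCS\subseteq\CS$, merely routed explicitly through \cref{cor:mcwrep}.
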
	
In particular, every function of the form~\cref{idfct} with coefficients in 
$\dualcircuitCS$ or $\dualCS$ is nonnegative.

We point out that \cref{prop:primalindual} was already observed by Katth\"an, Theobald, and the third author in \cite[Remark 3.7]{Katthaen:Naumann:Theobald:UnifiedFramework} without providing a proof.

\begin{proof}~
	\begin{enumerate}
		\item Let $ f \in \dualcircuitCS$ 
		with a corresponding vector of coefficients $\dualvar \in \widecheck \R^A$. 
		By representation (2) of~\cref{thm:dualScone}, we have $\dualdingle_{{\alpb}}\ge 0$ for all ${{\alpb}}\in \Mosq$ and for all 
		$\Vector{\lam} \in \Lambda(\Mosq,{\betab})$ it holds that
		\begin{align*}
			\ln(|\dualdingle_{\betab}|) \le \sum\limits_{{\alpb} \in \Mosq} \lam_{{\alpb}} \ln(\dualdingle_{{\alpb}}) 
			\le \sum\limits_{{\alpb}\in \Mosq, \lam_{{\alpb}} > 0}\lam_{{\alpb}} 
			\ln\left(\frac{\dualdingle_{{\alpb}}}{\lam_{{\alpb}}}\right) 
			= \ln(\Theta_f),
		\end{align*}

		where $\Theta_f$ denotes the circuit number of $f$.
		The last inequality holds as $\lam_{\alpb}\in [0,1]$ for every $\alpb\in \Mosq$ and the logarithmic function is monotonically increasing. 
		Thus, $
		-\dualdingle_{\betab} \le |\dualdingle_{\betab}|\le \Theta_f.$
		Applying~\cref{thm:oddImplication} we obtain the claimed result.
		\item By \cref{Definition:SignedSONCCone}, \cref{Definition:DualSONCCone} and part (1), we obtain
		\begin{align*}
			\dualCS\subseteq \dualcircuitCS\subseteq \circuitCS \subseteq \CS.
		\end{align*}
	\end{enumerate}
\end{proof}

We remark that the reverse implication does not hold in general. 

\begin{example} \label{rmk:dualsmallerprimal}
Consider the function $f(x) \define 1-2e^x+e^{2x}$ with the sets $\Mosq=\{0,2\}$, $\Neg=\{1\}$ and $\dualdingle_0=\dualdingle_2=1,\dualdingle_1=-2$.
As
\begin{align*}
1 = \frac{1}{2} \cdot 0 + \frac{1}{2} \cdot 2 \text{ and }  -\dualdingle_{\betab} = |\dualdingle_{\betab}| = (2^{1/2})^2,
\end{align*}
we have $f \in \CS$. But since
\begin{align*}
\sum\limits_{{\alpb}\in \Mosq}\lam_{{\alpb}} \ln(\dualdingle_{{\alpb}}) 
= 2\left(\frac{1}{2} \ln(1)\right)
= 0
< \ln(2) 
= \ln(|\dualdingle_1|)
\end{align*}
it follows that $f \notin  \dualCS$.
\end{example}

\section{Optimizing Over the Dual SONC Cone via Linear Programming}\label{sec:LPapprox}

In this section, we obtain a computationally fast approximation of the global optimization problem
\begin{align}
	\inf\limits_{\xb \in \R^n} f(\xb)
\label{globalopt}
\end{align}
for exponential sums $f \in \R^A$ and $A= \Mosq\cup \Neg$ defined as in \cref{Equation:DecompositionOfSupport} via the representations of the dual SONC cone in~\cref{thm:dualScone}.

\subsection{Formulation of the Optimization Problem}

First we prove that deciding membership in the dual SONC cone can be done via linear programming.  

\begin{proposition}\label{prop:containment}
	Let 
	\begin{align*}
		f \ = \ \sum\limits_{{\alpb}\in \Mosq} \dualdingle_{\alpb} \expalpha + \sum\limits_{{\betab}\in\Neg}\dualdingle_{\betab} \expbeta
	\end{align*}
	with $\dualvar \in \widecheck \R^A$ and $\dualdingle_{\alpb}\ge 0 \text{ for every } \alpha\in \vertices{\conv(A)}$.
	
	The following linear feasibility program in $\# \Neg$ many variables $(\Vector{\tau}^{(\betab)})_{{\betab}\in\Neg}$ verifies containment in the dual SONC cone. 
	\begin{align}
		&\; \ln\left(\frac{|\dualdingle_{\betab}|}{\dualdingle_{\alpb}}\right) \le ({\alpb}-{\betab})^T\Vector{\tau}^{(\betab)} \;\text{ for all } \; {\betab}\in\Neg,\ {\alpb}\in\Mosq \label{eq:linprg}
	\end{align}
\end{proposition}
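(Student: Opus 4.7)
The proposition is essentially a repackaging of representation (3) in \cref{thm:dualcone}, so my plan is to reduce the statement directly to that representation and check that the resulting system is genuinely a linear feasibility program.

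First, I would recall the third characterization in \cref{thm:dualcone}: a vector $\dualvar \in \widecheck{\R}^A$ lies in $\dualCS$ if and only if $\dualdingle_{\alpb} \ge 0$ for every $\alpb \in \Mosq$, and for every $\betab \in \Neg$ there exists some $\Vector{\tau} \in \R^n$ (depending on $\betab$) such that $\ln(|\dualdingle_{\betab}|/\dualdingle_{\alpb}) \le (\alpb-\betab)^T \Vector{\tau}$ for all $\alpb \in \Mosq$. The key observation is that the existential quantifier over $\Vector{\tau}$ is nested inside the universal quantifier over $\betab$, so the choice of $\Vector{\tau}$ may depend on $\betab$ but is otherwise unconstrained by other elements of $\Neg$. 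Consequently, we may introduce one independent vector of variables $\Vector{\tau}^{(\betab)} \in \R^n$ for each $\betab \in \Neg$ and rewrite the existential conditions as a single joint system, which is exactly \cref{eq:linprg}.

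Next, I would argue both directions of the equivalence. If \cref{eq:linprg} is feasible, then setting $\Vector{\tau} := \Vector{\tau}^{(\betab)}$ for each $\betab$ satisfies the hypothesis of representation (3), so $\dualvar \in \dualCS$. Conversely, if $\dualvar \in \dualCS$, representation (3) yields, for every $\betab \in \Neg$, a witness $\Vector{\tau}^{(\betab)}$ solving the corresponding block of inequalities; assembling these gives a feasible point for \cref{eq:linprg}. For the required non-negativity of $\dualdingle_{\alpb}$ at all $\alpb \in \Mosq$, I would note that this is encoded in the ambient decomposition $A = \Mosq \cup \Neg$ via the identification \cref{idfct}; the hypothesis in the proposition supplies it explicitly at vertices, where it is unavoidable for \cref{prop:primalindual} to give a nonnegativity certificate.

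Finally, I would verify that the system \cref{eq:linprg} is truly linear: the numbers $\ln(|\dualdingle_{\betab}|/\dualdingle_{\alpb})$ are constants computed from the given coefficients of $f$ (finite under the standing logarithmic conventions whenever $\dualdingle_{\alpb}>0$, and handled by the conventions $\ln(0) = -\infty$ and $\ln(y/0) = \infty$ otherwise), while the right-hand sides $(\alpb-\betab)^T \Vector{\tau}^{(\betab)}$ are linear in the decision variables $(\Vector{\tau}^{(\betab)})_{\betab \in \Neg}$. The system therefore consists of $\#\Mosq \cdot \#\Neg$ linear inequalities in $n\cdot \#\Neg$ real variables, which is a linear feasibility program. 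The proof contains no real obstacle; the only subtle point is making explicit that the per-$\betab$ existence of $\Vector{\tau}$ in representation (3) decouples across $\Neg$, which is what allows the many separate feasibility questions to be bundled into one LP.
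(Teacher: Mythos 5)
Your proof is correct and takes essentially the same route as the paper's: both reduce the statement directly to representation (3) of \cref{thm:dualcone} and observe that, with $\dualvar$ fixed, the constraints \cref{eq:linprg} are linear in the variables $(\Vector{\tau}^{(\betab)})_{\betab\in\Neg}$. Your explicit remark that the per-$\betab$ existential witnesses decouple across $\Neg$ and can therefore be bundled into a single feasibility system is precisely the (implicit) content of the paper's one-line argument.
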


\begin{proof}
	The program checks the conditions of~\cref{thm:dualcone}(3). Note that the assumptions ``$\dualdingle_{\alpb}\ge 0 \text{ for every } \alpha\in \vertices{\conv(A)}$'' on $f$ are necessary due to \cref{Equation:VerticesAreNonnegative}.
	As $\dualvar \in \widecheck \R^A$ is fixed, the inequalities are linear and hence \cref{eq:linprg} is a linear program. Moreover, $\dualdingle_{\alpb}\ge 0$ for every $\alpb\in\Mosq$ holds by assumption (or we know trivially that $f$ does not belong to the dual SONC cone). The last inequalities in \cref{thm:dualcone}(3) are satisfied trivially.
\end{proof}

In particular, fixing the non-auxiliary variables $\dualvar$ in a lifted version of the dual cone forms a polyhedron; see \cref{thm:dualcone} and \cref{prop:containment}.

To show that \cref{prop:containment} can be used to obtain an exact linear optimization problem over the dual SONC cone, observe that equivalently to~\cref{globalopt}, we can solve the optimization problem 
\begin{align*}
	\min \left\{ \gamma \ : \ f(\xb) + \gamma \ge 0  \; \text{ for all } \; \xb \in\R^n \right\}.
\end{align*}
Instead of using containment in the SONC cone as a certificate for nonnegativity, i.e., 
solving 
\begin{align*}
	\min \left\{ \gamma \ : \ f(\xb) + \gamma \in \CS\right\},
\end{align*}
we use the dual cone $\dualCS$. Recall that $\dualCS\subseteq \CS$ by \cref{prop:primalindual}. 
In particular, we do not dualize the LP to approximate the solution but optimize $f$ to be a function in the dual cone instead of the primal cone. 
Hence, we compute
\begin{align} \label{eq:dualprob}
\struc{- \widecheck \gamma^*}=	\min \left\{ \widecheck \gamma \ : \ \dualvar+\widecheck \gamma\cdot e_{\Vector{0}}\in \dualCS \right\}, 
\end{align}
where $e_{\Vector{0}}\in\R^A$ is the unit vector corresponding to $e^{\langle\xb,\Vector{0}\rangle}$, i.e., $\struc{\dualdingle_{\Vector{0}}+\widecheck \gamma}$ is the coefficient corresponding to $e^{\langle\xb,\Vector{0}\rangle}$. 

Consider $\dualvar$ to be given via 
\begin{align*}
		f(\xb) + \widecheck \gamma & \ = \ \sum\limits_{{\alpb}\in \Mosq} \dualdingle_{{\alpb}} \expalpha + 	\sum\limits_{{\betab}\in \Neg} \dualdingle_{{\betab}} \expbeta+\widecheck \gamma \\
		 & \ = \ \sum\limits_{{\alpb}\in \Mosq\setminus \{\Vector{0}\}} \dualdingle_{{\alpb}} \expalpha +
		\sum\limits_{{\betab}\in \Neg\setminus \{\Vector{0}\}} \dualdingle_{{\betab}} \expbeta+ (\dualdingle_{\Vector{0}}+\widecheck \gamma).
\end{align*}
Note that the constant term $\dualdingle_{\Vector{0}}$ of $f(\xb)$ can be zero.
By~\cref{thm:dualScone}(3), and assuming $w_{\Vector{0}}:=\widecheck \gamma + \dualdingle_{\Vector{0}}$ and $w_\alpha:=\dualdingle_\alpha$ for all $\alpha\in A\setminus\{\Vector{0}\}$, solving \cref{eq:dualprob} is equivalent to solving
\begin{align}
	\min \left\{\widecheck \gamma: 
	\begin{array}{l}
	\text{for all } \;  {\alpb} \in  \Mosq, w_{{\alpb}} \ge 0 \text{; and for all }\; \betab \in \Neg \\ 
	\text{there exists } \Vector{\tau}\in\R^n,
 \ln\left(\frac{|w_{\betab}|}{w_{{\alpb}}}\right)\leq ({\alpb}-{\betab})^T \Vector{\tau} \text{ for all } \;  {\alpb} \in \Mosq
	\end{array}
 \right\}. \label{dualopt}
\end{align}

Before stating the corresponding optimization program, we emphasize the fact that $\Vector{0}$ is not necessarily contained in $A$, i.e., for the next result we need to include it either in $\Mosq$ or $\Neg$, although we have to determine later to which one of the sets it belongs.

First, we prove several statements addressing this choice.

\begin{lemma}\label{lem:polynomialcase}
	Let $A=\Mosq\cup\Neg\subseteq\R^n$ as in \cref{Equation:DecompositionOfSupport} and $f\in \dualCS$ with $\Vector{0}\in A$. If $f$ is a polynomial, then $\Vector{0}\in\Mosq.$
\end{lemma}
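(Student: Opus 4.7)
The plan is to chain together three facts already available in the paper: (i) the dual SONC cone sits inside the nonnegativity cone, (ii) vertex coefficients of nonnegative functions must be positive, and (iii) when $A \subseteq \N^n$, the origin is automatically a vertex of $\conv(A)$.

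First, I would invoke \cref{prop:primalindual}(2), which gives $\dualCS \subseteq \CS \subseteq \NonnegCone$. Hence $f$ is a nonnegative exponential sum, and in particular \cref{Equation:VerticesAreNonnegative} applies: every coefficient of $f$ indexed by a vertex of $\conv(\supp(f)) = \conv(A)$ must be strictly positive.

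Second, I would verify that $\Vector{0}$ is a vertex of $\conv(A)$. This is the point where the polynomial hypothesis enters: since $f$ is a polynomial we have $A \subseteq \N^n$. If $\Vector{0}$ were a nontrivial convex combination $\sum_i \lambda_i \alpb_i$ with $\alpb_i \in A \setminus \{\Vector{0}\}$, $\lambda_i > 0$, and $\sum_i \lambda_i = 1$, then for each coordinate $j$ we would have $\sum_i \lambda_i (\alpb_i)_j = 0$. Since $(\alpb_i)_j \in \N$ and $\lambda_i > 0$, this forces $(\alpb_i)_j = 0$ for all $i, j$, contradicting $\alpb_i \neq \Vector{0}$. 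Thus $\Vector{0} \notin \conv(A \setminus \{\Vector{0}\})$, so $\Vector{0} \in \vertices{\conv(A)}$.

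Combining the two steps, the coefficient $\dualdingle_{\Vector{0}}$ at the vertex $\Vector{0}$ is strictly positive, so by the decomposition \cref{Equation:DecompositionOfSupport} of $A$ into positive and nonpositive parts, $\Vector{0} \in \Mosq$. The argument is essentially a single observation; the only mildly nontrivial step is recognizing that the polynomial assumption pins the support inside the nonnegative orthant and thereby makes $\Vector{0}$ a vertex, which is precisely where the hypothesis ``$f$ is a polynomial'' is used (it clearly fails for general exponential sums, where $\Vector{0}$ could lie in the relative interior of $\conv(A)$).
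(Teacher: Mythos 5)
Your proof is correct and follows the same route as the paper: both deduce from $\dualCS\subseteq\CS\subseteq\NonnegCone$ that the coefficient at any vertex of $\conv(A)$ must be positive via \cref{Equation:VerticesAreNonnegative}, and both use $A\subseteq\N^n$ to conclude that $\Vector{0}$ is a vertex of $\conv(A)$. The only difference is that you spell out the vertex argument (which the paper asserts without detail), and that elaboration is accurate.
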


\begin{proof}
	For a polynomial $f$, we have $A\subseteq \N^n$. As $\Vector{0}\in A$, we necessarily have $\Vector{0}\in \vertices{\conv(A)}$. With \cref{Equation:VerticesAreNonnegative} and the fact that $\dualCS\subseteq\CS$, we obtain the statement.
\end{proof}

\begin{lemma}\label{lem:optvalue}
	Let $\Vector{0}\in A=\Mosq\cup\Neg\subseteq\R^n$ as in \cref{Equation:DecompositionOfSupport} and $f\in \dualCS$ with coefficient vector $\dualvar \in \widecheck \R^A$. For the optimal lower bound $-\widecheck \gamma^*\le f(x) $ (as defined in (\ref{eq:dualprob})) and $\struc{c^*}=\ln(|\dualdingle_{\Vector{0}}+\widecheck \gamma^* |)$, we have
		\begin{align}\label{eq:lowerbound}
  	-\widecheck \gamma^* \ = \	\begin{cases}
		& \dualdingle_{\Vector{0}}-e^{c^*} \;  \text{ if } \; \Vector{0}\in\Mosq\\
		& \dualdingle_{\Vector{0}}	+e^{c^*} \; \text{ if }\; \Vector{0}\in\Neg.
		\end{cases}
		\end{align}
\end{lemma}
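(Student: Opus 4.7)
The plan is to exponentiate the defining relation $c^* = \ln(|\dualdingle_{\Vector{0}}+\widecheck \gamma^* |)$ to obtain $e^{c^*} = |\dualdingle_{\Vector{0}}+\widecheck \gamma^* |$, and then to determine the sign of $\dualdingle_{\Vector{0}}+\widecheck \gamma^*$ in each of the two cases. Once the sign is known, solving for $-\widecheck \gamma^*$ is a one-line rearrangement that yields precisely the two formulas in \cref{eq:lowerbound}.

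For $\Vector{0}\in\Mosq$, the linear program \cref{dualopt} contains the explicit constraint $w_{\Vector{0}} = \dualdingle_{\Vector{0}} + \widecheck \gamma \geq 0$. Thus at the optimum $\widecheck \gamma^*$ we have $\dualdingle_{\Vector{0}}+\widecheck \gamma^* \geq 0$, so that $e^{c^*} = \dualdingle_{\Vector{0}}+\widecheck \gamma^*$ and therefore $-\widecheck \gamma^* = \dualdingle_{\Vector{0}} - e^{c^*}$.

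For $\Vector{0}\in\Neg$ the variable $w_{\Vector{0}}$ enters \cref{dualopt} only through $|w_{\Vector{0}}|$ in the inequality family indexed by $\betab = \Vector{0}$, so no hard sign constraint is imposed. I would argue by monotonicity of LP feasibility in $\widecheck\gamma$: decreasing $\widecheck\gamma$ drives $\dualdingle_{\Vector{0}}+\widecheck\gamma$ downward, which relaxes the $\betab=\Vector{0}$ constraints while $\dualdingle_{\Vector{0}}+\widecheck\gamma\geq 0$ and tightens them once the expression becomes negative. Hence the minimum is attained with $\dualdingle_{\Vector{0}}+\widecheck \gamma^* \leq 0$, so $e^{c^*} = -(\dualdingle_{\Vector{0}}+\widecheck \gamma^*)$ and rearranging gives $-\widecheck \gamma^* = \dualdingle_{\Vector{0}} + e^{c^*}$. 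The degenerate case $\dualdingle_{\Vector{0}}+\widecheck \gamma^*=0$ forces $c^* = -\infty$, and both formulas agree as a limit.

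The main obstacle is the $\Vector{0}\in\Neg$ case, since there is no direct constraint pinning down the sign of $\dualdingle_{\Vector{0}}+\widecheck \gamma^*$; the sign must be extracted from the optimality condition via the monotone behavior of the binding constraints of \cref{dualopt}. Some care is also needed to align the formal minimum in \cref{eq:dualprob} with the feasibility LP of \cref{prop:containment} so that $c^*$ is unambiguously defined before the sign argument is made.
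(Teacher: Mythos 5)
Your proposal is correct and follows essentially the same route as the paper: exponentiate $c^*=\ln(|\dualdingle_{\Vector{0}}+\widecheck \gamma^*|)$ and resolve the absolute value according to the sign of $\dualdingle_{\Vector{0}}+\widecheck \gamma^*$ in each case. The only difference is in the case $\Vector{0}\in\Neg$: the paper reads the sign directly off the definition of the decomposition $A=\Mosq\cup\Neg$ in \cref{Equation:DecompositionOfSupport} (membership in $\Neg$ means the corresponding coefficient is nonpositive), so your monotonicity-of-feasibility argument, while valid, is not actually needed.
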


\begin{proof}
	If $\Vector{0}\in\Mosq$, we have $\dualdingle_{\Vector{0}}+\widecheck \gamma^*\ge 0$ implying $|\dualdingle_{\Vector{0}}+\widecheck \gamma^*|=\dualdingle_{\Vector{0}}+\widecheck \gamma^*$. If $\Vector{0}\in\Neg$, we have $\dualdingle_{\Vector{0}}+\widecheck \gamma^*< 0$ implying $|\dualdingle_{\Vector{0}}+\widecheck \gamma^*|=-\dualdingle_{\Vector{0}}-\widecheck \gamma^*$. This yields the statement.
\end{proof}

From now on, for $A=\Mosq\cup\Neg$ defined as in \cref{Equation:DecompositionOfSupport} and a fixed exponential function 
\begin{align*}
	f \ =\ \sum\limits_{\alpb\in\Mosq\setminus\{\Vector{0}\}} v_{\alpb}\expalpha + \sum\limits_{\betab\in\Neg\setminus\{\Vector{0}\}} v_{\betab}\expbeta + \dualdingle_{\Vector{0}},
\end{align*}
where for all $ \Vector{\alpha}\in \vertices{\conv(A)}$ we have $\dualdingle_{\alpb}\ge 0$ (i.e. $f$ satisfies \cref{Equation:VerticesAreNonnegative}),
with lower bound $-\widecheck \gamma^*$, we consider the following two linear programs in $\# \Neg+1$ variables $(\Vector{\tau}^{(\betab)})_{{\betab}\in\Neg}$ and
$\struc{c}=\ln(|\dualdingle_{\Vector{0}}+\widecheck \gamma|)$. 

	\begin{align*}\label{LP-Relax1}
	& \; \min \; c \tag{$\LPpos$} \\ 
	\st & 
	\begin{array}{ll}
		(1) & \text{for all } \; {\betab}\in\Neg, \text{for all }  {\alpb}\in\Mosq\setminus\{{\Vector{0}}\}:  \ln\left(\frac{|\dualdingle_{\betab}|}{\dualdingle_{\alpb}}\right) \le ({\alpb}-{\betab})^T\Vector{\tau}^{(\betab)},\\ 
		(2) & 
		\ln\left({|\dualdingle_{\betab}|}\right) - c \le (-{\betab})^T\Vector{\tau}^{(\betab)} 	\text{ for all } \; {\betab}\in\Neg ,
	\end{array},
	\end{align*}
	if ${\Vector{0}}\in \Mosq$ and 
	
		\begin{align*}\label{LP-Relax2}
	& \; \min \; c \tag{$\LPneg$} \\ 
	\st & 
	\begin{array}{ll}
	(1) & \text{for all } \; {\betab}\in\Neg\setminus\{{\Vector{0}}\}, \text{for all }  {\alpb}\in\Mosq:  \ln\left(\frac{|\dualdingle_{\betab}|}{\dualdingle_{\alpb}}\right) \le ({\alpb}-{\betab})^T\Vector{\tau}^{(\betab)},\\ 
	(2) & 
c- \ln\left({\dualdingle_{\alpb}}\right) \le {\alpb}^T\Vector{\tau}^{({\Vector{0}})} 	\text{for all } \;{\alpb}\in\Mosq
	\end{array}
	\end{align*}
	if ${\Vector{0}}\in\Neg$.

\begin{lemma}\label{lem:feasibilityLP} Let 
\begin{align*}
	f \ = \ \sum\limits_{\alpb\in\Mosq\setminus\{\Vector{0}\}} v_{\alpb}\expalpha + \sum\limits_{\betab\in\Neg\setminus\{\Vector{0}\}} v_{\betab}\expbeta + \dualdingle_{\Vector{0}}e^{\langle\xb,\Vector{0}\rangle},
\end{align*}
with $\dualdingle_{\Vector{0}}\ne -\widecheck \gamma^*$ and $A=\Mosq\cup\Neg$ defined as in \cref{Equation:DecompositionOfSupport}. 
At least one of the linear programs \cref{LP-Relax1} and \cref{LP-Relax2} has a solution for its corresponding assumption
	\begin{enumerate}
		\item $\Vector{0}\in\Mosq$ or
		\item $\Vector{0}\in\Neg$,
	\end{enumerate}
	 if and only if there exists some $\widecheck\gamma\in\R$ such that  $f+\widecheck\gamma\in\dualCS$.
	 
	For either assumption, the corresponding LP is infeasible if and only if for all $\widecheck\gamma\in\R$ we have $f+\widecheck\gamma\notin\dualCS$.
\end{lemma}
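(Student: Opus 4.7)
The plan is to prove the equivalence by directly translating the constraints of \cref{LP-Relax1} and \cref{LP-Relax2} into the conditions characterizing $\dualCS$ given in \cref{thm:dualcone}(3), with the LP variable $c$ encoding $\ln|\dualdingle_{\Vector{0}}+\widecheck\gamma|$. The sign of the shifted constant coefficient $w_{\Vector{0}}:=\dualdingle_{\Vector{0}}+\widecheck\gamma$ dictates whether $\Vector{0}$ sits in $\Mosq$ or in $\Neg$ for $f+\widecheck\gamma$, so the two linear programs simply cover the two possible sign configurations.

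For the forward direction, suppose \cref{LP-Relax1} admits a feasible solution $(c,(\Vector{\tau}^{(\betab)})_{\betab\in\Neg})$. Setting $\widecheck\gamma:=e^{c}-\dualdingle_{\Vector{0}}$ yields $w_{\Vector{0}}=e^{c}>0$, consistent with $\Vector{0}\in\Mosq$. I then verify the conditions of \cref{thm:dualcone}(3) for $f+\widecheck\gamma$: the nonnegativity requirement on $\Mosq$ is inherited from the assumptions on $f$ together with $w_{\Vector{0}}>0$, and for each $\betab\in\Neg$ the vector $\Vector{\tau}^{(\betab)}$ witnesses the required log-inequality, since its instance at $\alpb\in\Mosq\setminus\{\Vector{0}\}$ is literally constraint~(1) of \cref{LP-Relax1}, while its instance at $\alpb=\Vector{0}$ becomes constraint~(2) upon substituting $\ln(w_{\Vector{0}})=c$. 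Hence $f+\widecheck\gamma\in\dualCS$. The argument for \cref{LP-Relax2} is identical modulo signs, taking $\widecheck\gamma:=-e^{c}-\dualdingle_{\Vector{0}}$ and $w_{\Vector{0}}=-e^{c}<0$, with the $\betab=\Vector{0}$ instance of the log-inequality becoming constraint~(2) of \cref{LP-Relax2}.

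Conversely, if some $\widecheck\gamma$ produces $f+\widecheck\gamma\in\dualCS$, the hypothesis $\dualdingle_{\Vector{0}}\ne -\widecheck\gamma^{*}$ guarantees that such a $\widecheck\gamma$ can be chosen with $w_{\Vector{0}}\ne 0$. If $w_{\Vector{0}}>0$, I set $c:=\ln(w_{\Vector{0}})$ and extract, for each $\betab\in\Neg$, a vector $\Vector{\tau}^{(\betab)}\in\R^{n}$ from \cref{thm:dualcone}(3); the restrictions to $\alpb\in\Mosq\setminus\{\Vector{0}\}$ recover constraint~(1) of \cref{LP-Relax1} and the instance at $\alpb=\Vector{0}$ recovers constraint~(2), so the LP is feasible. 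The case $w_{\Vector{0}}<0$ produces feasibility of \cref{LP-Relax2} by a symmetric argument. The second assertion of the lemma (infeasibility iff no $\widecheck\gamma$ works) is then the contrapositive of the first.

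The principal delicacy lies in the boundary situation $w_{\Vector{0}}=0$, where the LP variable $c=\ln|w_{\Vector{0}}|$ is undefined and a feasible $\widecheck\gamma$ producing a vanishing shifted constant coefficient would escape detection by either LP. The hypothesis $\dualdingle_{\Vector{0}}\ne -\widecheck\gamma^{*}$ is tailored precisely to exclude this degeneracy; once it is granted, the remainder of the proof is routine term-by-term matching of the LP constraints with the conditions of \cref{thm:dualcone}(3).
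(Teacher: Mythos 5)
Your proposal is correct and follows essentially the same route as the paper's (much terser) proof: both arguments amount to observing that, after the substitution $c=\ln(|\dualdingle_{\Vector{0}}+\widecheck\gamma|)$, the constraints of \cref{LP-Relax1} and \cref{LP-Relax2} are term-by-term exactly the conditions of \cref{thm:dualcone}(3) for $f+\widecheck\gamma$, with the two LPs covering the two possible signs of the shifted constant coefficient. Your more detailed treatment of the degenerate case $\dualdingle_{\Vector{0}}+\widecheck\gamma=0$ is a welcome elaboration of what the paper leaves implicit.
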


\begin{proof}
Consider $f+\check\gamma^*$. As $\dualdingle_{\Vector{0}}\ne -\widecheck \gamma^*$ we have that $\Vector{0}\in A$. Hence, the inequalities are exactly the inequalities in \cref{thm:dualcone}, except for the fact that we use $c$ instead of $\ln(\dualdingle_{\Vector{0}})$ due to the former substitution. 
\end{proof}

We need to omit $\dualdingle_{\Vector{0}}= -\widecheck \gamma^*$, because in this case the programs (1) and (2)  in \cref{lem:feasibilityLP} are infeasible and unbounded, respectively.
To still obtain a lower bound on the function $f$, one can verify containment in the dual SONC cone by testing feasibility via \cref{eq:linprg}. 
If $f$ is indeed an element in the dual SONC cone, then $0$ is always a lower bound, but not necessarily the optimal bound on $\dualCS$.

From the considerations above and \cref{prop:containment} we can draw the following result.

\begin{theorem}\label{thm:LPAlgo}
	Let 
	\begin{align*}
		 f \ = \ \sum\limits_{\alpb\in\Mosq\setminus\{\Vector{0}\}} v_{\alpb}\expalpha + \sum\limits_{\betab\in\Neg\setminus\{\Vector{0}\}} v_{\betab}\expbeta + \dualdingle_{\Vector{0}} e^{\langle\xb,\Vector{0}\rangle},
	\end{align*}
	with $\dualdingle_{\Vector{0}}\ne -\check \gamma^*$ and $A=\Mosq\cup\Neg$ defined as in \cref{Equation:DecompositionOfSupport} and let $-\check \gamma^*\ne \dualdingle_{\Vector{0}}$ be the optimal value with $f\ge -\gamma*$ as defined in (\ref{eq:dualprob}). The linear programs \cref{LP-Relax1} and \cref{LP-Relax2} solve the optimization problem \cref{dualopt}.
\end{theorem}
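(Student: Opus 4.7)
The plan is to show that \cref{LP-Relax1} and \cref{LP-Relax2} are literal reformulations of \cref{dualopt} after the change of variable $c=\ln(|\dualdingle_{\Vector{0}}+\widecheck\gamma|)$. Once this identification is in place, \cref{lem:optvalue} converts an optimal $c^*$ back into the desired lower bound $-\widecheck\gamma^*$, and \cref{lem:feasibilityLP} guarantees that infeasibility of the LP is equivalent to the absence of any $\widecheck\gamma\in\R$ with $f+\widecheck\gamma\in\dualCS$. Hence the only content left is the constraint-by-constraint correspondence between the feasible set of \cref{dualopt} and that of the relevant LP, together with a monotonicity argument matching the two objectives.

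I would then treat the two cases separately. Assume first $\Vector{0}\in\Mosq$. Then $w_{\Vector{0}}=\dualdingle_{\Vector{0}}+\widecheck\gamma$ is nonnegative by \cref{Equation:VerticesAreNonnegative}, so $c=\ln(w_{\Vector{0}})$. For each $\betab\in\Neg$ and $\alpb\in\Mosq\setminus\{\Vector{0}\}$, both $w_\betab=\dualdingle_\betab$ and $w_\alpb=\dualdingle_\alpb$ are fixed data, so the constraint in \cref{thm:dualcone}(3) reads $\ln(|\dualdingle_\betab|/\dualdingle_\alpb)\le(\alpb-\betab)^T\Vector{\tau}^{(\betab)}$, i.e.\ exactly line (1) of \cref{LP-Relax1}. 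For the remaining pairs (those with $\alpb=\Vector{0}$) one substitutes $\ln(w_{\Vector{0}})=c$ and rearranges to obtain line (2). Since the exponential is strictly increasing, $\widecheck\gamma=e^{c}-\dualdingle_{\Vector{0}}$ is minimized exactly when $c$ is. The case $\Vector{0}\in\Neg$ is handled symmetrically: now $w_{\Vector{0}}\le 0$, hence $c=\ln(-\dualdingle_{\Vector{0}}-\widecheck\gamma)$; the constraints with $\betab\ne\Vector{0}$ become line (1) of \cref{LP-Relax2}, while those arising from $\betab=\Vector{0}$ applied to every $\alpb\in\Mosq$ become line (2) after the same substitution. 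Applying \cref{lem:optvalue} to the optimum $c^*$ then recovers $-\widecheck\gamma^*=\dualdingle_{\Vector{0}}\mp e^{c^*}$ with the sign selected by the active case.

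The main obstacle will be the bookkeeping of the case distinction, in particular correctly pulling $\Vector{0}$ out of exactly one of the index sets in the product $\Mosq\times\Neg$ and rewriting only those constraints that actually involve $\dualdingle_{\Vector{0}}+\widecheck\gamma$, while leaving all others untouched. One must also verify that the hypothesis $\dualdingle_{\Vector{0}}\ne-\widecheck\gamma^*$ excludes precisely the degenerate situation in which $c$ would need the value $-\infty$, a scenario that is instead handled separately by a direct feasibility test of \cref{eq:linprg}, as discussed after \cref{lem:feasibilityLP}. Beyond this case analysis, the proof is a transparent application of representation (3) in \cref{thm:dualcone} together with strict monotonicity of the logarithm, and I do not anticipate any genuinely difficult step.
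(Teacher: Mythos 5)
Your route is the same as the paper's: reduce everything to the representation \cref{thm:dualcone}(3) via the substitution $c=\ln(|\dualdingle_{\Vector{0}}+\widecheck \gamma|)$, then invoke \cref{lem:feasibilityLP} for feasibility and \cref{lem:optvalue} to convert $c^*$ back into the bound; your constraint-by-constraint matching is correct and in fact more explicit than what the paper writes. However, the step ``the case $\Vector{0}\in\Neg$ is handled symmetrically'' conceals a genuine sign reversal in the objective. For $\Vector{0}\in\Mosq$ you correctly get $\widecheck \gamma=e^{c}-\dualdingle_{\Vector{0}}$, which is increasing in $c$, so minimizing $c$ minimizes $\widecheck \gamma$. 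But for $\Vector{0}\in\Neg$ one has $\widecheck \gamma=-\dualdingle_{\Vector{0}}-e^{c}$, which is \emph{decreasing} in $c$: minimizing $\widecheck \gamma$ then corresponds to \emph{maximizing} $c$. This is consistent with the structure of \cref{LP-Relax2}, where $c$ appears only in upper-bound constraints (line (2)), so that minimizing $c$ would be unbounded below whenever the program is feasible and would drive $\widecheck \gamma$ toward $-\dualdingle_{\Vector{0}}$, i.e., toward the worst admissible bound in that regime. Your monotonicity argument as stated only covers the first case; the second case needs its own (reversed) argument, and as a side effect this touches a point on which the paper itself is terse, since its proof simply cites \cref{lem:optvalue} without carrying out the monotonicity check.

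The second gap is that you do not address the issue that occupies most of the paper's own proof: since the sign of $\dualdingle_{\Vector{0}}+\widecheck \gamma^*$ is unknown before solving, one cannot decide a priori whether $\Vector{0}\in\Mosq$ or $\Vector{0}\in\Neg$, i.e., which of the two LPs is the ``active'' one. The paper runs both programs, uses \cref{lem:feasibilityLP} to argue that at least one is feasible exactly when some shift of $f$ lies in $\dualCS$, takes the unique feasible one if only one is feasible, and, if both are feasible, converts each optimum via \cref{lem:optvalue} and selects the better bound. Your proof establishes correctness of each LP \emph{conditional on knowing the case}, but the theorem asserts that the two LPs together solve \cref{dualopt}, so this selection mechanism is part of the content and must be supplied.
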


\begin{proof}
	We set $A:=A\cup\{\Vector{0}\}$. First, note that we do not know the value of $\dualdingle_{\Vector{0}}+\widecheck \gamma^*$ before computing the optimal value, and particularly we do not know the sign of $\dualdingle_{\Vector{0}}+\widecheck \gamma^*$. Thus, we cannot determine whether $\Vector{0}\in\Mosq$ or $\Vector{0}\in\Neg$ before computing the optimal value.
	
	As we made the assumption $\dualdingle_{\Vector{0}} \ne - \widecheck \gamma^*$, according to \cref{lem:feasibilityLP}, at least one of the problems \cref{LP-Relax1} and \cref{LP-Relax2} is feasible if and only if $f\in\dualCS$. 
	In the case that only one linear program is feasible, $\Vector{0}$ is contained in the corresponding set and hence, this program yields the optimal value. 
	If both programs are feasible, there exist $\widecheck \gamma_1$ and $\widecheck \gamma_2$ such that $\dualdingle_{\Vector{0}}+\widecheck \gamma_1$ is nonnegative and $f + \widecheck \gamma_1 \cdot e^{\langle\xb,\Vector{0}\rangle} \in \dualCS$ for $\Vector{0} \in \Mosq$, and $\dualdingle_{\Vector{0}}+\widecheck \gamma_2$ is negative and $f + \widecheck \gamma_2 \cdot e^{\langle\xb,\Vector{0}\rangle} \in \dualCS$ for $\Vector{0} \in \Neg$.
	
	 Thus, we select the linear program which yields the better bound.
	
	According to \cref{lem:optvalue}, the lower bound on the dual SONC cone is
	\begin{align}
		-\widecheck \gamma^* = \begin{cases}
		& \dualdingle_{\Vector{0}}-e^{c^*} \;  \text{ if } \; \Vector{0}\in\Mosq\\
		& \dualdingle_{\Vector{0}}	+e^{c^*} \; \text{ if }\; \Vector{0}\in\Neg
		\end{cases}.
	\end{align}
\end{proof}

Note that optimizing over the dual cone does not yield the actual optimal value in every case. Consider for example the Motzkin polynomial
\begin{align}\label{ex:motzkin}
f(x,y)=x^2y^4+x^4y^2-3x^2y^2+1.
\end{align}
This is a nonnegative polynomial on $\R^2$ with $\inf_{(x,y)\in\R^2} f(x,y)=0$. Since in the polynomial case we always need $\Vector{0} \in \Mosq$, the linear program \cref{LP-Relax1} for $f$ is the following:
\begin{align*}
 & \; \min \; c\\
\st & \; \ln\left({3}\right) \le 2\tau_2\\
& \; \ln\left({3}\right) \le 2\tau_1\\
& \; \ln\left(3\right) +2\tau_1 + 2\tau_2 \le c,
\end{align*}
returning the lower bound $f\ge -26$ on $\R^2$.

\subsection{Numerical results}

In what follows we present the results of numerical experiments of several examples.

Any LP solver can be used to solve the optimization problem in \cref{thm:LPAlgo}. Here, we used cvxpy \cite{cvxpy,cvxpy_rewriting}; see also \cite{agrawal2019dgp}, in the software POEM \cite{poem:software} available at 
\begin{center}
	\url{http://www.iaa.tu-bs.de/AppliedAlgebra/POEM/} 
\end{center}
on a \verb!Intel(R) Core(TM) i7-8700 CPU! with $3.20$ GHz and $15$ GB of RAM.

To compare our approach with existing results, we restrict our computations to the polynomial case, i.e., the case $A \subseteq \N^n$.
Note that in this setting the convex hull $\conv(\supp(p))$ of exponents of a polynomial $p \in \R^A$ is commonly referred to as the \struc{\textit{Newton polytope}} of $p$.
We use selected examples, mainly from \cite{ExpCompSoncSos18}, to demonstrate our findings. 
Those polynomials that are not explicitly stated in the examples can be found online via 

\begin{center}
	\url{https://www3.math.tu-berlin.de/combi/RAAGConOpt/comparison_paper/}.
\end{center}

The value $\mathsf{opt}$ computed here corresponds to $\check \gamma^*$ as described in \eqref{eq:lowerbound} in the dual case and to a $\gamma^*$ with $p(\xb) \ge - \gamma^*$ in the SOS, SAGE and SONC case.
Hence, a smaller value for $\mathsf{opt}$ means a better lower bound to the polynomial.
To compute this lower bound we need to make a sign change.

In the examples that follow, we denote by ``SAGE'' the bound computed via solving the REP introduced by Chandrasekaran and Shah \cite{Chandrasekaran:Shah:SAGE-REP}, which provides the optimal primal SONC/SAGE bound. With ``SONC'' we denote the covering algorithm for SONC described in \cite[Algorithm 3.4]{ExpCompSoncSos18}. This algorithm solves a GP providing a lower bound for the (optimal) primal SONC/SAGE bound, but (experimentally) with better runtimes and numerical behavior. Thus, it is in particular possible that the bound ``SONC'' is worse than the bound ``Dual SONC''. The bound ``Dual SONC'', however, is always at most as good as ``SAGE'', the optimal primal SONC/SAGE bound (if both bounds can be computed successfully).

The examples are chosen in a way to display that it depends on the particular instance, which approach yields the best bound or has the best runtime respectively.

\begin{example}[\cite{ExpCompSoncSos18}, Example 4.1] \label{ex:1}
	Consider the following polynomial of degree $8$ in two variables with three interior points. 
	\begin{align*}
	p \ = & \ 1 + 3\cdot x_0^{2} x_1^{6} + 2\cdot x_0^{6} x_1^{2} + 6\cdot x_0^{2} x_1^{2} - 1\cdot x_0^{1} x_1^{2} - 2\cdot x_0^{2} x_1^{1} - 3\cdot x_0^{3} x_1^{3}
	\end{align*}
	As expected, the bound returned by our dual approach is worse than the one computed via SONC and SAGE, but it is computed faster; see \cref{table:1}.
	The sum of squares (SOS) approach does not yield a result.
\end{example}
\begin{table}[h]
	\begin{tabular}{|c|c|c|}
		\hline 
		strategy & time & $\mathsf{opt}$ \\ 
		\hline 
		SONC & $0.02254$ & $0.72732$ \\ 
		\hline 
		SAGE & $0.03820$ & $-0.69316$ \\ 
		\hline 
		SOS & $0.30280$ & inf \\
		\hline
		Dual SONC & $0.02706$ & $4.51135$ \\ 
		\hline 
	\end{tabular} 
	\caption{\cref{ex:1}: A polynomial in two variables of degree $8$ with three inner terms.}\label{table:1}
\end{table}
\begin{example}\label{ex:2}
	\begin{align*}
	p = -3  + 1.5\cdot x_1^{6} + 11.5\cdot x_0^{6} - 0.5\cdot x_1^{2} + 0.5\cdot x_0^{4}
	\end{align*}
	In this example all tested approaches yield similar results; see \cref{table:2}.
\end{example}
\begin{table}[h]
	\begin{tabular}{|c|c|c|}
		\hline 
		strategy & time & $\mathsf{opt}$ \\ 
		\hline 
		SONC & $0.01458 $ & $3.11111$ \\ 
		\hline 
		SAGE & $0.01658$ & $3.11111$ \\ 
		\hline 
		SOS & $0.12911$ & $3.11111$\\
		\hline
		Dual SONC & $0.01391$ & $3.28868$ \\ 
		\hline 
	\end{tabular} 
	\caption{\cref{ex:2}: A polynomial in two variables of degree $6$. }\label{table:2}
\end{table}
Since the SONC approach does, in general, not compute the optimal bound of a polynomial on the primal SONC cone, it is also possible that our approach yields better results.
This is demonstrated in the following example.

\begin{example}[\cite{ExpCompSoncSos18}, Example 4.2]\label{ex:3}
	Consider a polynomial whose Newton polytope is a standard simplex with $n = 10$, $d = 30$, and  $200$ terms.
	The bound computed with the dual approach is much better than the one found via SONC. 
	The SAGE approach yields no result, the computations for the SOS approach were aborted after $60$ minutes; see \cref{table:3}. 
\end{example}
\begin{table}[h]
	\begin{tabular}{|c|c|c|}
		\hline 
		strategy & time & $\mathsf{opt}$ \\ 
		\hline 
		SONC & $0.90452$ & $1109.45$ \\ 
		\hline 
		SAGE & $72.90220$ & inf  \\ 
		\hline 
		SOS & $>3600$ & --\footnotemark\\
		\hline
		Dual SONC & $4.21717$ & $-35.25153$ \\ 
		\hline 
	\end{tabular} 
	\caption{\cref{ex:3}: A polynomial in $10$ variables of degree $30$, where $\conv(A)$ is the standard simplex.}\label{table:3}
\end{table}
\begin{example}[\cite{ExpCompSoncSos18}, Example 4.5]\label{ex:4}
	Consider a polynomial with the computationally challenging \struc{\textit{dwarfed cube}}; see \cite{avis1997good}, in dimension $7$ as its Newton polytope.
	For this polynomial, \cref{LP-Relax1} is infeasible.
	The SAGE approach also fails; see \cref{table:4}.
\end{example}
	\begin{table}[h]
		\begin{tabular}{|c|c|c|}
			\hline 
			strategy & time & $\mathsf{opt}$ \\ 
			\hline 
			SONC & $0.34685$ & $-28.2779$ \\ 
			\hline 
			SAGE & $3.19388 $ & inf  \\ 
			\hline 
			SOS & $629.07700$ & $-28.3181$ \\
			\hline
			Dual SONC & -- & inf \\ 
			\hline 
		\end{tabular} 
		\caption{\cref{ex:4}: A polynomial supported on the $7$-dimensional dwarfed cube, scaled by a factor $4$, with $63$ inner terms.}\label{table:4}
	\end{table}
To further illustrate the case of infeasibility in our linear program, consider the following example.
\begin{example}\label{ex:5}
	Consider a polynomial supported on the dwarfed cube in dimension $2$ with two additional interior points.
	\begin{align*}
	p \ = & \ 0.5\cdot x_0^{2} x_1^{4} + 2\cdot x_0^{4} + 1\cdot x_0^{4} x_1^{2} + 2 + 2\cdot x_1^{4} - 1.0\cdot x_0^{1} x_1^{1} -  c \cdot x_0^{3} x_1^{1}.
	\end{align*}
	If we choose $c = 3$, \cref{LP-Relax1} will be infeasible. For $c=1$, however, we get the results presented in \cref{table:5}.
\end{example}
	\begin{table}[h]
		\begin{tabular}{|c|c|c|}
			\hline 
			strategy & time & $\mathsf{opt}$ \\ 
			\hline 
			SONC & $0.02835$ & $-1.58558$ \\ 
			\hline 
			SAGE & $0.02907 $ & $-1.92193$  \\ 
			\hline 
			SOS & $0.08322$ & $-1.92193$ \\
			\hline
			Dual SONC & $0.02486$ & $0.37055$ \\ 
			\hline 
		\end{tabular} 
		\caption{\cref{ex:5}: A polynomial supported on the dwarfed cube in dimension $2$.}
		\label{table:5}
	\end{table}
\footnotetext{Aborted after $60$ minutes.}
\begin{example}\label{ex:6}
	Consider a polynomial supported on \struc{\textit{Kirkman's Icosahedron}}; see \cite{kirkman}, with three additional interior points.
	While the SOS approach was aborted after exceeding a runtime of $60$ minutes, the dual and primal SONC approach and the SAGE approach all yield results very quickly; see \cref{table:6}.
	\begin{align*}
		p \ &= 0.5924068000899325 x_0^{42} x_1^{36} x_2^{36} + 0.9040680744391449 x_0^{6} x_1^{36} x_2^{36} \\
		&+ 0.6286297557636527 x_0^{42} x_1^{12} x_2^{36} + 0.22136661817072706 x_0^{42} x_1^{36} x_2^{12} \\
		&+ 1.9921397074037133 x_0^{6} x_1^{12} x_2^{36} + 2.4444012612478447 x_0^{42} x_1^{12} x_2^{12} \\
		&+ 0.7745809478318744 x_0^{6} x_1^{36} x_2^{12} + 0.4168575879720979 x_0^{6} x_1^{12} x_2^{12} \\
		&+ 2.131772858737973 x_0^{48} x_1^{32} x_2^{24} + 0.5582642102257477 x_1^{32} x_2^{24} \\
		&+ 0.39948625235355123 x_0^{48} x_1^{16} x_2^{24} + 1.055352501861479 x_1^{16} x_2^{24} \\
		&+ 0.5862781645697882 x_0^{24} x_1^{48} x_2^{40} + 0.8297411574785997 x_0^{24} x_2^{40} \\
		&+ 1.5885016970170502 x_0^{24} x_1^{48} x_2^{8} + 0.5937153134426314 x_0^{24} x_2^{8} \\
		&+ 0.7427966893909136 x_0^{36} x_1^{24} x_2^{48} + 0.9341646224001856 x_0^{12} x_1^{24} x_2^{48} \\
		&+ 0.48065798662872594 x_0^{36} x_1^{24} + 0.6729615719188968 x_0^{12} x_1^{24} \\
		&- 1.477600441785058 x_0^{9} x_1^{6} x_2^{1} - 0.1791748172699452 x_0^{3} x_1^{4} x_2^{5} \\
		&- 0.27468070265719946 x_0^{9} x_1^{3} x_2^{7}.
	\end{align*}
\end{example}
	\begin{table}[h]
		\begin{tabular}{|c|c|c|}
			\hline 
			strategy & time & $\mathsf{opt}$ \\ 
			\hline 
			SONC & $4.22899$ & $1.00391$ \\ 
			\hline 
			SAGE & $0.155045$ & $0.50104$  \\ 
			\hline 
			SOS & $>3600$ & --\footnote{Aborted after $60$ minutes.} \\
			\hline
			Dual SONC & $0.15034$ & $2.00542$ \\ 
			\hline 
		\end{tabular} 
		\caption{\cref{ex:6}: A polynomial supported on Kirkman's Icosahedron.}
		\label{table:6}
	\end{table}

\section{Conclusion and Outlook}

The results presented in this paper provide an effective algorithm for optimizing over the dual SONC cone.
Recall that the dual SONC cone is a proper subset of the corresponding primal cone; see \cref{Sec:DualContainment}.
Hence, we observe that, as expected (compare \cref{ex:motzkin}), the linear program developed in \cref{sec:LPapprox} yields, in general, worse results than the SOS, SONC, and SAGE approach.

Since our new approach only relies on solving LPs it is, however, computationally more stable with promising runtimes, and it gives a result whenever a solution in the dual cone exists.

In particular, we obtain an algorithm which yields a bound computed independently of the existing primal SONC, SAGE, and SOS algorithms.

We close the paper by stating two interesting lines of future research.

\subsection{Relaxation of the dual SONC cone}

The constraints guaranteeing containment in the dual cone are very restrictive, occasionally leading to infeasible linear programs \cref{LP-Relax1} and \cref{LP-Relax2}.

In this case, one can solve a relaxed version of the presented linear program, allowing the constraints of \cref{LP-Relax1} and \cref{LP-Relax2} to be violated by some tolerance $\mathsf{tol} \ge 0$ and solving the following optimization problem for a fixed $f \ =\ \sum\limits_{\alpb\in\Mosq\setminus\{\Vector{0}\}} v_{\alpb}\expalpha + \sum\limits_{\betab\in\Neg\setminus\{\Vector{0}\}} v_{\betab}\expbeta + \dualdingle_{\Vector{0}},$
with $\dualdingle_{\alpb}\ge 0$ for all $ \Vector{\alpha}\in \vertices{\conv(A)}$ and lower bound $-\widecheck \gamma^*$. 

\begin{align*}\label{LP-Relaxtol}
			& \; \min \; c+\varepsilon\cdot \mathsf{tol}  \tag{LP-relax}\\ 
			\st & 
			\begin{array}{ll}
				(1) & \text{for all } \; {\betab}\in\Neg\setminus\{{\Vector{0}}\};\\ 
				& \text{for all }  {\alpb}\in\Mosq\setminus\{{\Vector{0}}\}:  \ln\left(\frac{|\dualdingle_{\betab}|}{\dualdingle_{\alpb}}\right) \le ({\alpb}-{\betab})^T\Vector{\tau}^{(\betab)} + \mathsf{tol};\\ 
				(2) & 
				\begin{cases}
					\text{for all } \; {\betab}\in\Neg: \ln\left({|\dualdingle_{\betab}|}\right) \le (-{\betab})^T\Vector{\tau}^{(\betab)} + \mathsf{tol} & \text{if }{\Vector{0}}\in \Mosq, \\
					\text{for all } \;{\alpb}\in\Mosq: \ln\left({\dualdingle_{\alpb}}\right) \le {\alpb}^T\Vector{\tau}^{({\Vector{0}})} + \mathsf{tol} & \text{if }{\Vector{0}}\in\Neg.
				\end{cases}
			\end{array}
		\end{align*}

Here we add $\mathsf{tol}$ as an optimization variable and change the objective function to $c +\varepsilon\cdot \mathsf{tol} $ with choosing $\varepsilon > 0$ as weight of the violation parameter.  Note that this relaxed problem is still a linear program.

This approach yields a solution in a relaxed version of the dual SONC cone.
While there is no guarantee that a solution found in this way is still contained in the nonnegativity cone, one can always find an approximation to a solution this way. 

In fact, since the dual SONC cone is contained in the primal as shown in \cref{sec:LPapprox}, this relaxation also yields a certificate that the found solution is contained in a relaxed version of the primal SONC cone and therefore also in a relaxation of the nonnegativity cone.

\subsection{Primal Polyhedron}

Another possibility for future research would be to investigate the polyhedron we discovered in \cref{prop:containment}. From duality theory we know that there has to exists a primal polyhedron as well. The primal SONC cone itself is, however, not polyhedral; see, e.g., \cite{Forsgaard:deWolff:BoundarySONCcone}.
Hence, it might be interesting to examine the relation of this primal polyhedron to the SONC cone.

\bibliographystyle{amsalpha}
\bibliography{SONC_Dual_ArXiv1}

\providecommand{\bysame}{\leavevmode\hbox to3em{\hrulefill}\thinspace}
\providecommand{\MR}{\relax\ifhmode\unskip\space\fi MR }
\providecommand{\MRhref}[2]{%
  \href{http://www.ams.org/mathscinet-getitem?mr=#1}{#2}
}
\providecommand{\href}[2]{#2}
\begin{thebibliography}{FKYdW20}

\bibitem[ABS97]{avis1997good}
D.~Avis, D.~Bremner, and R.~Seidel, \emph{How good are convex hull
  algorithms?}, Computational Geometry \textbf{7} (1997), no.~5-6, 265--301.

\bibitem[ADB19]{agrawal2019dgp}
A.~Agrawal, S.~Diamond, and S.~Boyd, \emph{Disciplined geometric programming},
  Optimization Letters \textbf{13} (2019), 961--976.

\bibitem[AVDB18]{cvxpy_rewriting}
A.~Agrawal, R.~Verschueren, S.~Diamond, and S.~Boyd, \emph{A rewriting system
  for convex optimization problems}, Journal of Control and Decision \textbf{5}
  (2018), no.~1, 42--60.

\bibitem[BKVH07]{Boyd:et:al:TutorialOnGP}
S.~Boyd, S.-J. Kim, L.~Vandenberghe, and A.~Hassibi, \emph{A tutorial on
  geometric programming}, Optim. Eng. \textbf{8} (2007), no.~1, 67--127.

\bibitem[BPT13]{Blekherman:Parrilo:Thomas}
G.~Blekherman, P.A. Parrilo, and R.R. Thomas, \emph{Semidefinite optimization
  and convex algebraic geometry}, MOS-SIAM Series on Optimization, vol.~13,
  SIAM and the Mathematical Optimization Society, Philadelphia, 2013.

\bibitem[CS16]{Chandrasekaran:Shah:SAGE-REP}
V.~Chandrasekaran and P.~Shah, \emph{Relative entropy relaxations for signomial
  optimization}, SIAM J. Optim. \textbf{26} (2016), no.~2, 1147--1173.

\bibitem[DB16]{cvxpy}
S.~Diamond and S.~Boyd, \emph{{CVXPY}: A {P}ython-embedded modeling language
  for convex optimization}, Journal of Machine Learning Research \textbf{17}
  (2016), no.~83, 1--5.

\bibitem[DIdW17]{Dressler:Iliman:deWolff:Positivstellensatz}
M.~Dressler, S.~Iliman, and T.~de~Wolff, \emph{A {P}ositivstellensatz for
  {S}ums of {N}onnegative {C}ircuit {P}olynomials}, SIAM J. Appl. Algebra Geom.
  \textbf{1} (2017), no.~1, 536--555.

\bibitem[DIdW19]{Dressler:Iliman:deWolff:FirstConstrained}
\bysame, \emph{An approach to constrained polynomial optimization via
  nonnegative circuit polynomials and geometric programming}, J. Symb. Comput.
  \textbf{91} (2019), 149--172.

\bibitem[DNT18]{Dressler:Naumann:Theobald:SONCdual}
M.~{Dressler}, H.~{Naumann}, and T.~{Theobald}, \emph{{The dual cone of sums of
  non-negative circuit polynomials}}, 2018, Preprint, {\sf arXiv:1809.07648}.

\bibitem[DP73]{Duffin:Peterson:Signomials}
R.J. Duffin and E.L. Peterson, \emph{Geometric programming with signomials}, J.
  Optim. Theory Appl. \textbf{11} (1973), 3--35.

\bibitem[FdW19]{Forsgaard:deWolff:BoundarySONCcone}
J.~Forsg{\aa}rd and T.~de~Wolff, \emph{The algebraic boundary of the sonc
  cone}, 2019, Preprint, {\sf arXiv:1905.04776}.

\bibitem[Fet12]{kirkman}
H.L. Fetter, \emph{A polyhedron full of surprises}, Mathematics Magazine
  \textbf{85} (2012), no.~5, 334--342.

\bibitem[FKYdW20]{Feliu:Kaihnsa:Yueruek:deWolff}
E.~Feliu, N.~Kaihnsa, O.~Y{\"u}r{\"uk}, and T.~de~Wolff, \emph{The kinetic
  space of multistationarity in dual phosphorylation}, 2020, Preprint, {\sf
  arXiv:2001.08285}.

\bibitem[GKZ94]{GKZ94}
I.~M. Gelfand, M.~M. Kapranov, and A.~V. Zelevinsky, \emph{Discriminants,
  resultants and multidimensional determinants}, Modern Birkh\"auser Classics,
  Birkh\"auser Boston Inc., Boston, MA, 1994.

\bibitem[IdW16a]{Iliman:deWolff:Circuits}
S.~Iliman and T.~de~Wolff, \emph{Amoebas, nonnegative polynomials and sums of
  squares supported on circuits}, Res. Math. Sci. \textbf{3} (2016), 3:9.

\bibitem[IdW16b]{Iliman:deWolff:FirstGP}
\bysame, \emph{Lower bounds for polynomials with simplex newton polytopes based
  on geometric programming}, SIAM J. Optim. \textbf{26} (2016), no.~2,
  1128--1146.

\bibitem[KNT19]{Katthaen:Naumann:Theobald:UnifiedFramework}
L.~{Katth{\"a}n}, H.~{Naumann}, and T.~{Theobald}, \emph{{A unified framework
  of {SAGE} and {SONC} polynomials and its duality theory}}, 2019, Preprint,
  {\sf arXiv:1903.08966}.

\bibitem[Las01]{Lasserre:GlobalOpt}
J.B. Lasserre, \emph{Global optimization with polynomials and the problem of
  moments}, SIAM J. Optim. \textbf{11} (2000/01), no.~3, 796--817.

\bibitem[Las10]{Lasserre:BookMomentsApplications}
\bysame, \emph{{Moments, positive polynomials and their applications.}},
  London: Imperial College Press, 2010 (English).

\bibitem[Las15]{Lasserre:IntroductionPolynomialandSemiAlgebraicOptimization}
\bysame, \emph{An introduction to polynomial and semi-algebraic optimization},
  Cambridge Texts in Applied Mathematics, vol.~1, Cambridge University Press,
  2015.

\bibitem[Lau09]{Laurent:Survey}
M.~Laurent, \emph{Sums of squares, moment matrices and optimization over
  polynomials}, Emerging Applications of Algebraic Geometry, IMA Vol. Math.
  Appl., vol. 149, Springer, New York, 2009, pp.~157--270.

\bibitem[LRS10]{DeLoera:et:al:Triangulations}
J.A.~De Loera, J.~Rambau, and F.~Santos, \emph{Triangulations}, Algorithms and
  Computation in Mathematics, vol.~25, Springer-Verlag, Berlin, 2010,
  Structures for algorithms and applications.

\bibitem[MCW18]{Murray:Chandrasekaran:Wierman:SoncSage}
R.~{Murray}, V.~{Chandrasekaran}, and A.~{Wierman}, \emph{{Newton Polytopes and
  Relative Entropy Optimization}}, 2018, Preprint, {\sf arXiv:1810.01614}.

\bibitem[MCW19]{Murray:Chandrasekaran:Wierman:PartialDual}
\bysame, \emph{{Signomial and Polynomial Optimization via Relative Entropy and
  Partial Dualization}}, 2019, Preprint, {\sf arXiv:1907.00814}.

\bibitem[Par00]{Parrilo:Thesis}
P.A. Parrilo, \emph{Structured semidefinite programs and semialgebraic geometry
  methods in robustness and optimization}, 2000, PhD Thesis, California
  Institute of Technology.

\bibitem[Rez89]{Reznick:AGI}
B.~Reznick, \emph{{Forms Derived from the Arithmetic-Geometric Inequality}},
  {Math. Ann.} \textbf{{283}} (1989), {431--464}.

\bibitem[Sch14]{schneider-book}
R.~Schneider, \emph{Convex bodies: the {B}runn--{M}inkowski theory}, Cambridge
  University Press, 2014.

\bibitem[SdW18]{ExpCompSoncSos18}
H.~Seidler and T.~de~Wolff, \emph{An experimental comparison of {SONC} and
  {SOS} certificates for unconstrained optimization}, 2018, Preprint, {\sf
  arXiv:1808.08431}.

\bibitem[SdW19]{poem:software}
\bysame, \emph{{POEM}: Effective methods in polynomial optimization, version
  0.2.1.0}, \url{http://www.iaa.tu-bs.de/AppliedAlgebra/POEM/}, jul 2019.

\bibitem[Wan18]{Wang:AtMostOneNegativeTerm}
J.~Wang, \emph{Nonnegative polynomials and circuit polynomials}, 2018,
  preprint, arXiv:1804.09455.

\end{thebibliography}

\end{document}